\newcommand{\xdownarrow}[1]{%
    {\left\downarrow\vbox to #1{}\right.\kern-\nulldelimiterspace}}
\newcommand{\Z}{\mathbb{Z}}
\newcommand{\Q}{\mathbb{Q}}
\newcommand{\bq }{\begin{equation}}
\newcommand{\eq }{\end{equation}}
\theoremstyle{plain}
\newtheorem{thm}{Theorem}[section]
\newtheorem{prop}[thm]{Proposition}
\newtheorem{cor}[thm]{Corollary}
\newtheorem{rem}[thm]{Remark}
\theoremstyle{definition}
\newtheorem{defn}[thm]{Definition}
\newtheorem{example}[thm]{Example}
\DeclareMathOperator{\modu}{{{mod}}}
\theoremstyle{example}
\title{Hypersurface model-fields of definition for smooth hypersurfaces and their twists}
\author[E. Badr] {Eslam Badr}
\address{$\bullet$\,\,Eslam Badr}
\address{Department of Mathematics,
Faculty of Science, Cairo University, Giza-Egypt}
\email{eslam@sci.cu.edu.eg}
\author[F. Bars] {Francesc Bars}
\address{$\bullet$\,\,Francesc Bars Cortina}
\address{Departament Matem\`atiques, Edif. C, Universitat Aut\`onoma de Barcelona\\
08193 Bellaterra, Catalonia} \email{francesc@mat.uab.cat}
\thanks{F.Bars supported by MTM2016-75980-P}
\begin{document}
\maketitle


\begin{abstract} Given a smooth projective variety of dimension $n-1\geq 1$ defined over a perfect field $k$ that admits a non-singular hypersurface modelin $\mathbb{P}^n_{\overline{k}}$ over $\overline{k}$, a fixed algebraic closure of $k$, it does not
necessarily have a non-singular hypersurface model defined over the
base field $k$. We first show an example of such phenomenon: a
variety defined over $k$ admitting non-singular hypersurface models
but none defined over $k$. We also determine under which conditions
a non-singular hypersurface model over $k$ may exist. Now, even
assuming that such a smooth hypersurface model exists, we wonder
about the existence of non-singular hypersurface models over $k$ for
its twists. We introduce a criterion to characterize twists
possessing such models and we also show an example of a twist not
admitting any non-singular hypersurface model over $k$, i.e for any
$n\geq 2$, there is a smooth projective variety of dimension $n-1$ over $k$ which is a twist of a smooth hypersurface variety over $k$, but itself does not admit any non-singular hypersurface model over
$k$. Finally, we obtain a theoretical result to describe all the
twists of smooth hypersurfaces with cyclic automorphism group having
a model defined over $k$ whose automorphism group is generated by a
diagonal matrix.

The particular case $n=2$ for smooth plane curves was studied by the authors jointly with E. Lorenzo Garc\'{\i}a in
\cite{badr2016twists}, and we deal here with the problem in higher dimensions.
\end{abstract}

\begin{section}{Introduction}
Let $X_0,\ldots,X_n$ be a homogenous coordinate system for
$\mathbb{P}^n_{\overline{k}}$, the $n$-dimensional projective space
over $\overline{k}$. Given a smooth projective variety $\overline{V}\subset\mathbb{P}^n_{\overline{k}}$, the group of birational transformations of $V$ onto itself is denoted by $\operatorname{Bir}(\overline{V})$, the group of automorphisms of $V$ (i.e. the group of biregular transformations of $V$ onto itself) is  $\operatorname{Aut}(\overline{V})$, and by $\operatorname{Lin}(\overline{V})$ we mean the subgroup of automorphisms of $\overline{V}$ induced by
projective linear transformations in
$\operatorname{Aut}(\mathbb{P}^n_{\overline{k}})=\operatorname{PGL}_{n+1}(\overline{k})$, the general linear group of $(n+1)\times(n+1)$ projective matrices.

Now, let $\overline{V}$ be a smooth hypersurface in $\mathbb{P}^n_{\overline{k}}$, that is, an $(n-1)$-dimensional smooth projective variety identified with a hypersurface model $H_{\overline{V},d,n}$ represented by a single homogenous polynomial equation, say
$F(X_0,\ldots,X_n)=0$ of some degree $d$ over
$\overline{k}$ without singularities (assume once and for all that $d\geq4$). It is known that a smooth plane curve $\overline{V}$ of degree $d\geq4$ has finitely many automorphisms, and moreover any automorphism is induced by a projective linear transformation of $\mathbb{P}^2_{\overline{k}}$, thus $\operatorname{Aut}(\overline{V})=\operatorname{Lin}(\overline{V})$. Matsumura-Monsky (1946) showed that, for $n\geq3$,
$\operatorname{Lin}(\overline{V})$ is a finite group and moreover
$\operatorname{Aut}(\overline{V})=\operatorname{Lin}(\overline{V})$ except possibly when $(n,d)=(3,4)$, see \cite[Theorem 1,2]{matsumura1963automorphisms}.

\begin{defn}
A smooth projective variety $V$ defined over a field $k$ is called a \emph{smooth
$L$-hypersurface over $k$ of degree $d$ in $\mathbb{P}^n_{\overline{k}}$} and $L$ is a \emph{hypersurface model-field of definition for $V$}, where $L/k$ is a field extension inside $\overline{k}$, if the base extension $V\otimes_kL$
is $L$-isomorphic to a non-singular hypersurface model $H_{V\otimes_kL,d,n}:F_{V\otimes_kL}(X_0,\ldots,X_n)=0$ of degree $d$ with coefficients in $L$. For the special case $L=k$, $V$ is simply called a \emph{smooth
hypersurface over $k$}.
\end{defn}
Suppose that $V$ is a smooth $\overline{k}$-hypersurface over $k$ of
degree $d\geq4$ in $\mathbb{P}^n_{\overline{k}}$. Hence, by finiteness and linearity of $\operatorname{Aut}(V\otimes_k\overline{k})$ for $(n,d)\neq(3,4)$, we get that $\overline{V}:=V\otimes_k\overline{k}$ has a linear series, that
allows us to embed $\Upsilon:\overline{V}\hookrightarrow
\mathbb{P}^n_{\overline{k}}$ as a smooth hypersurface. Further, this
linear system is unique up to
$\operatorname{PGL}_{n+1}(\overline{k})$-conjugation. Thus, for $(n,d)\neq(3,4)$, we can always think
$\operatorname{Aut}(\overline{V})$ as a finite subgroup of
$\operatorname{PGL}_{n+1}(\overline{k})$, leaving invariant a fixed
non-singular hypersurface model
$H_{\overline{V},d,n}:F_{\overline{V}}(X_0,\ldots,X_n)=0$ of degree
$d\geq4$ coming from the embedding $\Upsilon:\overline{V}\hookrightarrow
\mathbb{P}^n_{\overline{k}}$ over $\overline{k}$. In other words,
any other non-singular hypersurface model over $\overline{k}$ is
defined by an equation of the form
$F_{P^{-1}\overline{V}}(X_0,\ldots,X_n):=F_{\overline{V}}(P(X_0,\ldots,X_n))=0$
for some $P\in \text{PGL}_{n+1}(\overline{k})$.

The aim of this paper is to make a study for fields of definition of
non-singular hypersurface models of a smooth
$\overline{k}$-hypersurface $V$ over $k$, also for its twists, by
considering the embedding
$\text{Aut}(\overline{V})\hookrightarrow\text{PGL}_{n+1}(\overline{k})$.
We note that if the smooth projective variety $V$, or any of its
twists over $k$, is a smooth hypersurface over $k$, then we have an
embedding of $\operatorname{Gal}(\overline{k}/k)$-groups for its
automorphism group into $\text{PGL}_{n+1}(\overline{k})$. This
approach leads to two natural questions: the first one, given a
smooth projective variety $V$ defined over a field $k$ and admitting
a non-singular $\overline{k}$-hypersurface model, does it have a
non-singular hypersurface model over $k$?; and secondly, if the
answer is yes, does every twist of $V$ over $k$ also have a
non-singular hypersurface model over $k$? For both questions the
answer is \textbf{No} in general, it does not. We obtain results for
the varieties for which the above questions always have an
affirmative answer, and we show different examples concerning the
negative general answer.

The paper is a generalization of our work in \cite{badr2016twists} jointly with Elisa Lorenzo Garc\'{\i}a,
where the same problem was addressed, but for smooth hypersurfaces
in $\mathbb{P}^2$, that is for smooth plane curve of degree $d\geq
4$.

\subsection*{Acknowledgments}
The authors would like to express their gratitude to David Kohel who
suggested to us the generalization of the work in
\cite{badr2016twists} for hypersurfaces instead of plane curves,
also to Joaquim Ro\'e and Xavier Xarles for their useful comments
and suggestions.
\end{section}
\begin{section}{Statements of the results}


First, we study the minimal field $L$ where there exists a
non-singular model over $L$ for a smooth $\overline{k}$-hypersurface
$V$ defined over $k$.

We show the following, which follows from
\cite{roe2014galois}.
\begin{thm}\label{sufficientconditions}
Let $V$ be a smooth $\overline{k}$-hypersurface over a perfect field
$k$ of degree $d\geq4$ in $\mathbb{P}^n_{\overline{k}}$ such that $(n,d)\neq(3,4)$. Then, $V$ is not necessarily a
smooth hypersurface over $k$. However, it does in any of the
following situations:
 \begin{enumerate}[(i)]
   \item if $V$ has $k$-rational points, i.e. when $V(k)\neq\emptyset$,
   \item if $\operatorname{gcd}(d,n+1)=1$,
   \item if the $(n+1)$-torsion $\operatorname{Br}(k)[n+1]$ of the Brauer group $\operatorname{Br}(k)$ is trivial.
 \end{enumerate}
 In general, $V$ has a non-singular hypersurface model over a field extension $L/k$ of degree $[L:k]\,|\,n+1$. Also, in case of number fields $k$, we show in \S 4.1 an example of a smooth $\overline{k}$-hypersurface over $k$, which is not a smooth hypersurface over $k$, but it does over a Galois field extension of degree $n+1$ over $k$.
 \end{thm}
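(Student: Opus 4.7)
The plan is to reduce the question to Brauer--Severi varieties and the obstruction theory for descending line bundles. Since $(n,d)\neq(3,4)$, Matsumura--Monsky applies and $\operatorname{Aut}(\overline{V})=\operatorname{Lin}(\overline{V})$ is realized as a finite subgroup of $\operatorname{PGL}_{n+1}(\overline{k})$ through the essentially unique hyperplane embedding $\Upsilon$. The stabilizer of $\overline{V}$ in $\operatorname{PGL}_{n+1}(\overline{k})$ is precisely $\operatorname{Aut}(\overline{V})$, so twists of the embedded pair $(\mathbb{P}^n_{\overline{k}},\overline{V})$ are classified by $H^1(G,\operatorname{Aut}(\overline{V}))$ with $G=\operatorname{Gal}(\overline{k}/k)$.

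Composing with $\operatorname{Aut}(\overline{V})\hookrightarrow\operatorname{PGL}_{n+1}(\overline{k})$ and applying Hilbert 90 to the sequence $1\to\overline{k}^{*}\to\operatorname{GL}_{n+1}\to\operatorname{PGL}_{n+1}\to 1$ produces the classical injection $H^1(k,\operatorname{PGL}_{n+1})\hookrightarrow\operatorname{Br}(k)[n+1]$. Following this chain, the twist class attached to $V$ yields a Brauer--Severi variety $X/k$ of dimension $n$ together with a $k$-embedding $V\hookrightarrow X$ as a hypersurface of degree $d$. Thus $V$ admits a non-singular hypersurface model over $k$ if and only if $X\cong\mathbb{P}^n_k$, equivalently $[X]=0$ in $\operatorname{Br}(k)[n+1]$. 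This is the reformulation that I would borrow from \cite{roe2014galois}, and it simultaneously explains the failure in general: producing a non-trivial class in $\operatorname{Br}(k)[n+1]$ sitting in the image gives an example as required for the negative part of the statement.

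With the reformulation in hand, each of the three sufficient conditions is immediate. For (i), a $k$-rational point of $V$ is in particular one of $X$, and a Brauer--Severi variety with a $k$-point is split. For (iii), the hypothesis $\operatorname{Br}(k)[n+1]=0$ forces $[X]=0$. For (ii), I would analyze the image of $\operatorname{Pic}(X)\to\operatorname{Pic}(\overline{X})^G=\mathbb{Z}\,H_X$: the class $-K_X=(n+1)H_X$ is intrinsic and so lifts to $\operatorname{Pic}(X)$, while the $k$-divisor $V\subset X$ has class $dH_X$ and obviously lifts. Hence both $(n+1)H_X$ and $dH_X$ are in the image, but the minimal positive integer $r$ with $rH_X$ lifting equals $\operatorname{ord}([X])$ via the connecting map $\operatorname{Pic}(\overline{X})^G\to\operatorname{Br}(k)$ sending $H_X\mapsto[X]$. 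Consequently $\operatorname{ord}([X])\mid\gcd(d,n+1)$, and $\gcd(d,n+1)=1$ forces $[X]=0$.

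For the general bound, the standard splitting result for Brauer--Severi varieties provides a field extension $L/k$ with $[L:k]\mid n+1$ (one may take a maximal commutative subfield of the central simple $k$-algebra representing $[X]$, or one of its subfields) over which $X$ becomes trivial; this exhibits a non-singular hypersurface model of $V\otimes_k L$ in $\mathbb{P}^n_L$. The explicit example of a variety over a number field $k$ that does not have a hypersurface model over $k$ but does over a Galois extension of degree exactly $n+1$ is postponed to \S 4.1, where one produces a $V$ whose associated class $[X]\in\operatorname{Br}(k)$ has period $n+1$. The principal obstacle is the first step, namely setting up the Brauer--Severi correspondence in a Galois-equivariant and functorial way that is uniform in $n$; once that is granted (which is the content of \cite{roe2014galois}), the cohomological steps above are essentially bookkeeping.
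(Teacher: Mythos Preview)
Your proposal is correct and follows essentially the same route as the paper: both arguments produce a Brauer--Severi variety $X$ (the paper's $\mathcal{B}$) containing $V$ as a degree-$d$ hypersurface, via the essentially unique Galois-invariant linear series, and then reduce each of (i)--(iii) and the general bound to standard splitting criteria for $X$. The only minor difference is that for (ii) the paper simply cites \cite[Theorem 13]{roe2014galois} (equivalently the Lichtenbaum criterion recorded there as Theorem~\ref{302}), whereas you unwind the Picard obstruction directly to show $\operatorname{ord}([X])\mid\gcd(d,n+1)$; this is the same argument made explicit rather than a genuinely different idea.
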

\subsection*{Notation and conventions}
We write $\text{Gal}(L/k)$ for the Galois group of the extension $L/k$, and we consider left actions. The Galois cohomology sets of a
$\text{Gal}(L/k)$-group $G$ when $L/k$ is Galois are denoted by
$\text{H}^i(\text{Gal}(L/k), G)$ with $i\in\{0,1\}$ respectively.
For the particular case $L=\overline{k}$, we use $\operatorname{G}_k$ instead of
$\text{Gal}(\overline{k}/k)$ and $\text{H}^1(k,G)$ instead of
$\text{H}^1(\operatorname{Gal}(\overline{k}/k),G)$.

Second, we assume that $V$ is a smooth hypersurface over $k$. We
obtain the next Theorem characterizing the twists of $V$, which are
also smooth hypersurfaces over $k$. In particular, we prove the
following.
\begin{thm}\label{lem1} Let $V$ be a smooth hypersurface over a perfect field $k$ identified with a fixed non-singular hypersurface model $H_{V,d,n}:F_{V}(X_0,\ldots,X_n)=0$, with $F_V(X_0,\ldots,X_n)\in k[X_0,\ldots,X_n]$ with $(n,d)\neq(3,4)$.
Then, there exists a natural
map
$$\Sigma: \operatorname{H}^1(\operatorname{Gal}(\overline{k}/k),\operatorname{Aut}(\overline{V}))\rightarrow
\operatorname{H}^1(\operatorname{Gal}(\overline{k}/k),\operatorname{PGL}_{n+1}(\overline{k})),$$
defined by the inclusion
$\operatorname{Aut}(H_{V,d,n}\otimes_k\overline{k})\subseteq\operatorname{PGL}_{n+1}(\overline{k})$
as $\operatorname{Gal}(\overline{k}/k)$-groups. The preimage
$\Sigma^{-1}([\mathbb{P}^n_k])$ is formed by the set of all twists
of $V$ over $k$ that are smooth hypersurfaces over $k$, where
$[\mathbb{P}^n_k]$ denotes the class of the trivial Brauer-Severi
variety of dimension $n$ over $k$. Moreover, any such twist is
obtained through an automorphism of $\mathbb{P}^{n}_{\overline{k}}$,
that is, the twist is $k$-isomorphic to
$$H_{F_{M^{-1}V,d,n}}:\,F_{M^{-1}V}(X_0,\ldots,X_n):=F_{V}(M(X_0,\ldots,X_n))\in k[X_0,\ldots,X_n],$$
for some $M\in \operatorname{PGL}_{n+1}(\overline{k})$.

We can reinterpret the map $\Sigma$ as the map sending a twist $V'$ over $k$ to the Brauer-Severi variety $B$ where it lives (cf. \cite[Lemma 5]{roe2014galois}).
\end{thm}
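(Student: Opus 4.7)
My plan is to interpret the theorem through the functoriality of non-abelian Galois cohomology together with the standard dictionaries between $\operatorname{H}^1$ sets and twists/Brauer-Severi varieties. The map $\Sigma$ is induced by the inclusion of $\operatorname{G}_k$-groups
$\iota\colon \operatorname{Aut}(\overline{V})\hookrightarrow \operatorname{PGL}_{n+1}(\overline{k})$,
which exists by the standing hypothesis $(n,d)\neq(3,4)$ with $d\geq 4$ (so automorphisms are linear, as recalled in the introduction) and is Galois-equivariant because $F_V$ has $k$-rational coefficients. Since $\operatorname{H}^1(\operatorname{G}_k,\operatorname{Aut}(\overline{V}))$ parametrizes the $k$-isomorphism classes of twists of $V$, and $\operatorname{H}^1(\operatorname{G}_k,\operatorname{PGL}_{n+1}(\overline{k}))$ parametrizes Brauer-Severi varieties of dimension $n$ over $k$, this $\Sigma$ is the natural candidate for sending a twist to its ambient Brauer-Severi variety.

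First I would unwind the twist $V_\xi$ associated to a cocycle $\xi$: fix a $\overline{k}$-isomorphism $\phi\colon V_\xi\otimes_k\overline{k}\to \overline{V}$ with $\sigma(\phi)\circ\phi^{-1}=\xi_\sigma$. Composing with the fixed embedding $\overline{V}\hookrightarrow \mathbb{P}^n_{\overline{k}}$ embeds $V_\xi\otimes_k\overline{k}$ in $\mathbb{P}^n_{\overline{k}}$, and the descent obstruction for this embedding along $\operatorname{G}_k$ is precisely the cocycle $\iota\circ\xi$; the twist of $\mathbb{P}^n_{\overline{k}}$ by this cocycle is a Brauer-Severi variety $B_\xi$ into which $V_\xi$ naturally descends. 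This yields the final sentence of the theorem: $\Sigma$ is the map sending $V_\xi$ to the Brauer-Severi variety $B_\xi$ on which it lives, in agreement with \cite[Lemma 5]{roe2014galois}.

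Next I would prove the core equivalence. For the backward direction, if $\iota\circ\xi$ is a coboundary there exists $M\in\operatorname{PGL}_{n+1}(\overline{k})$ with $\iota(\xi_\sigma)=M^{-1}\sigma(M)$ for every $\sigma$. Because each $\xi_\sigma$ preserves the hypersurface $F_V=0$, one has $F_V\circ \xi_\sigma=\lambda_\sigma F_V$ for some nonzero scalar, and using $F_V\in k[X_0,\ldots,X_n]$ one computes
$$\sigma(F_V\circ M)=F_V\circ \sigma(M)=F_V\circ \bigl(M\cdot \iota(\xi_\sigma)\bigr)=\lambda_\sigma\,(F_V\circ M),$$
so the ideal generated by $F_V\circ M$ is Galois-stable, hence $H_{M^{-1}V,d,n}\colon F_{M^{-1}V}(X_0,\ldots,X_n):=F_V(M(X_0,\ldots,X_n))=0$ is a smooth hypersurface model defined over $k$. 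The map $M^{-1}\circ \phi$ is then Galois-equivariant by the coboundary relation, producing a $k$-isomorphism $V_\xi\cong H_{M^{-1}V,d,n}$. For the forward direction, if $V_\xi$ is a smooth hypersurface over $k$, then its $k$-embedding into $\mathbb{P}^n_k$ base-changes and composes with $\phi^{-1}$ to give a second embedding $\overline{V}\hookrightarrow\mathbb{P}^n_{\overline{k}}$ as a smooth hypersurface; by the uniqueness of the hyperplane linear system up to $\operatorname{PGL}_{n+1}(\overline{k})$-conjugation (emphasized in the introduction), it differs from the original by some $M\in\operatorname{PGL}_{n+1}(\overline{k})$, and a direct Galois chase extracts from this the coboundary relation $\iota(\xi_\sigma)=M^{-1}\sigma(M)$, so $\Sigma([\xi])=[\mathbb{P}^n_k]$.

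The main obstacle I anticipate is the careful bookkeeping of Galois actions in the forward direction, namely extracting the trivializing element $M$ from an abstract $k$-hypersurface model of $V_\xi$ and checking that the descent datum for that model really coincides with $\iota\circ\xi$ in $\operatorname{PGL}_{n+1}(\overline{k})$, rather than with a twisted version of it. The geometric input that makes this cohomological manipulation clean is the uniqueness, up to $\operatorname{PGL}_{n+1}(\overline{k})$-conjugation, of the embedding coming from the canonical hyperplane linear system, which is precisely what allows us to pass freely between hypersurface models of $\overline{V}$ and elements of $\operatorname{PGL}_{n+1}(\overline{k})$.
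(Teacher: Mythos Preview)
Your proposal is correct and follows essentially the same route as the paper's proof: both directions hinge on the Matsumura--Monsky linearity of automorphisms to produce an element $M\in\operatorname{PGL}_{n+1}(\overline{k})$ relating the two hypersurface models, and then read off (respectively, feed in) the coboundary relation $\xi_\sigma=M\circ{}^{\sigma}M^{-1}$ to identify the trivial class. Your write-up is in fact more explicit than the paper's in the backward direction, where you verify that the ideal generated by $F_V\circ M$ is Galois-stable and that $M^{-1}\circ\phi$ descends; the paper simply asserts that triviality of $\Sigma([V'])$ forces the $\overline{k}$-isomorphism to be induced by some $\tilde{M}\in\operatorname{PGL}_{n+1}(\overline{k})$ and that $F_{\tilde{M}^{-1}V}=0$ is then a model over $k$.
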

Then, we have assertions similar to those in Theorem \ref{sufficientconditions}.
\begin{cor}\label{lem200}
Let $V$ be a smooth hypersurface over a perfect field $k$ of degree
$d\geq4$ inside $\mathbb{P}^n_{\overline{k}}$ with $(n,d)\neq(3,4)$. The map
$\Sigma$ in Theorem \ref{lem1} is trivial, if
$\operatorname{gcd}(d,n+1)=1$ or $\operatorname{Br}(k)[n+1]$ is
trivial. In particular, for such situations, any twist $V'$ for $V$
over $k$ is also a smooth hypersurface over $k$.

On the other hand, we construct in \S 4.4 a one parameter family
$H_{F_a,d=2p,n}$, for $a\in k$, of smooth hypersurfaces over a number field
$k_{d}$ where $p$ is an odd prime integer, and we show
that each member of the family has a twist over $k_d$ that does not
admits a hypersurface model over $k_d$.
\end{cor}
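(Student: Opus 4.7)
The statement splits into two complementary parts which I would attack separately.

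For the triviality of $\Sigma$, I plan to reduce directly to Theorem~\ref{sufficientconditions}. If $V'$ is any twist of $V$ over $k$, then $V'\otimes_k\overline{k}\cong V\otimes_k\overline{k}$ as $\overline{k}$-varieties, so $V'$ is itself a smooth $\overline{k}$-hypersurface over $k$ of the same degree $d\geq 4$ in $\mathbb{P}^n_{\overline{k}}$ and with $(n,d)\neq(3,4)$. Under either hypothesis $\gcd(d,n+1)=1$ or $\operatorname{Br}(k)[n+1]=0$, Theorem~\ref{sufficientconditions} then forces $V'$ to admit a non-singular hypersurface model defined over $k$. By the description of $\Sigma$ in Theorem~\ref{lem1}, the latter is equivalent to $\Sigma([V'])=[\mathbb{P}^n_k]$, so every class in $\operatorname{H}^1(\operatorname{G}_k,\operatorname{Aut}(\overline{V}))$ maps to the distinguished point, which is exactly the triviality of $\Sigma$.

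For the sharpness example, I would construct the family in three steps. First, pick a number field $k_d$ and an $n$ with $\gcd(2p,n+1)>1$ and with $\operatorname{Br}(k_d)[n+1]$ non-trivial; the non-triviality can be arranged by prescribing local invariants at well-chosen places via class field theory, so that the associated Brauer-Severi variety is not isomorphic to $\mathbb{P}^n_{k_d}$. Second, write an explicit one-parameter family $F_a(X_0,\ldots,X_n)=0$ of smooth hypersurfaces of degree $d=2p$ over $k_d$, designed so that $\operatorname{Aut}(\overline{V_a})$ contains a cyclic subgroup generated by a diagonal matrix whose order interacts appropriately with $n+1$; this is the algebraic ingredient that allows the coboundary map to hit the chosen Brauer class. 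Third, exhibit a Galois cocycle $\xi\in\operatorname{H}^1(\operatorname{G}_{k_d},\operatorname{Aut}(\overline{V_a}))$ whose image under the connecting map to $\operatorname{Br}(k_d)[n+1]$ matches the non-trivial class fixed in Step~1; by Theorem~\ref{lem1}, the resulting twist of $V_a$ cannot be realized as a non-singular hypersurface over $k_d$.

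The main obstacle is Step~2: one must simultaneously guarantee the smoothness of $F_a=0$ for a positive-dimensional set of parameters $a\in k_d$ and equip the family with enough diagonal symmetry that its cocycle coboundary actually realizes the prescribed non-trivial Brauer class. Smoothness for hypersurfaces of degree $d=2p$ with imposed diagonal automorphisms is a transversality check on the Jacobian restricted to the fixed loci of the symmetry, while the non-triviality of the image in $\operatorname{Br}(k_d)[n+1]$ reduces to a local invariant computation at the chosen place. Once these two technical points are secured, the conclusion follows formally by running Theorem~\ref{lem1} in reverse: the twist fails to be a smooth hypersurface over $k_d$ precisely because its image under $\Sigma$ is non-trivial.
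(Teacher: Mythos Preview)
Your argument for the first half is exactly the paper's: a twist $V'$ is again a smooth $\overline{k}$-hypersurface of the same degree and dimension, so Theorem~\ref{sufficientconditions} applies directly and Theorem~\ref{lem1} translates this into $\Sigma([V'])=[\mathbb{P}^n_k]$.

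Your plan for the construction, however, contains a genuine obstruction in Step~2. You propose to arrange that $\operatorname{Aut}(\overline{V_a})$ contains a cyclic subgroup generated by a \emph{diagonal} matrix and then push a cocycle valued in that subgroup to a non-trivial class in $\operatorname{H}^1(k,\operatorname{PGL}_{n+1}(\overline{k}))$. But this is precisely the situation ruled out by Theorem~\ref{lem411}: a diagonal generator lifts to $\operatorname{GL}_{n+1}(\overline{k})$, so the composite
\[
\operatorname{H}^1(k,\langle\psi\rangle)\longrightarrow \operatorname{H}^1(k,\operatorname{GL}_{n+1}(\overline{k}))\longrightarrow \operatorname{H}^1(k,\operatorname{PGL}_{n+1}(\overline{k}))
\]
factors through a set that is trivial by Hilbert~90. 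Hence no cocycle built from diagonal automorphisms can have non-trivial image under $\Sigma$, regardless of how you tune local invariants in Step~1.

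The paper avoids this by using a \emph{non-diagonal} automorphism: the companion-type matrix $\phi=C_{\zeta_p}\in\operatorname{PGL}_{n+1}(k)$ (with $k=\mathbb{Q}(\zeta_p)$, $d=2p$) acting on the family $F_a=\sum_i X_i^{2p}+a\sum_{i<j}X_i^pX_j^p$. For a cyclic extension $L_m=k(\sqrt[p]{m})$ with $\operatorname{Gal}(L_m/k)=\langle\sigma\rangle$, the cocycle $\sigma\mapsto\phi$ is exactly the cocycle defining the cyclic algebra $(\chi,\zeta_p)$ of Example~\ref{parametrization1}; it is non-trivial because $\zeta_p$ is not a norm from $L_m$. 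The point is that $\phi$ does \emph{not} lift compatibly to $\operatorname{GL}_{n+1}$, so Hilbert~90 does not kill the class. If you want to salvage your outline, replace the diagonal symmetry in Step~2 by this kind of cyclic-permutation symmetry; then Step~3 becomes the norm computation for a cyclic algebra rather than a general Brauer invariant chase.
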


Finally, we study the twists for a smooth hypersurface $V$ over
$k$, such that $\text{Aut}(\overline{V})$ is a cyclic group.
\begin{defn} Let $V/k:F_V(X_0,\ldots,X_n)=0$ be a smooth hypersurface over a perfect field $k$, where $F_V(X_0,\ldots,X_n)\in k[X_0,\ldots,X_n]$.
We call a twist $V'$ for $V$ over $k$, a \emph{diagonal twist}, if there
exists an $M\in \text{PGL}_{n+1}(k)$ and a diagonal matrix $D\in\text{PGL}_{n+1}(\overline{k})$, such that $V'$ is $k$-isomorphic to
$$F_{(MD)^{-1}V}(X_0,\ldots,X_n):=F_{V}(MD(X_0,\ldots,X_n))=0,$$
where $F_{(MD)^{-1}V}(X_0,\ldots,X_n)\in k[X_0,\ldots,X_n]$
\end{defn}
We prove the following.
\begin{thm}[Diagonal twists]\label{lem411} Let $V/k:F_V(X_0,\ldots,X_n)=0$ be a smooth hypersurface over a perfect field $k$ of degree $d\geq4$ with $(n,d)\neq(3,4)$.
Assume that
$\operatorname{Aut}(V\otimes_k\overline{k})\hookrightarrow\operatorname{PGL}_{n+1}(\overline{k})$, given by the linear system, is a
    non-trivial cyclic group of order $m$ generated by $\psi=\operatorname{diag}(1,\zeta_m^{a_1},\zeta_m^{a_2},\ldots,\zeta_m^{a_n})$ for some $a_i\in\mathbb{N}$, where $\zeta_m$ denotes a fixed primitive $m$-th root of unity provided that $m$ is coprime to the characteristic of $k$.
     Then, all the twists in
    $\operatorname{Twist}_k(V)$ are diagonal given over $k$ by a non-singular polynomial equation of the form $F_{D^{-1}V}(X_0,\ldots,X_n)=0$ where
    $F_{D^{-1}V}(X_0,\ldots,X_n)\in k[X_0,\ldots,X_n]$ and $D$ is a diagonal matrix in $\operatorname{PGL}_{n+1}(\overline{k})$.
    In particular, the map $\Sigma$ in Theorem \ref{lem1} is trivial.
\end{thm}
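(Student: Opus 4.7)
The overall strategy is to identify $\operatorname{Aut}(\overline{V})$ with $\mu_m$ as a $\operatorname{G}_k$-module, use Hilbert 90 / Kummer theory to parametrize $\operatorname{H}^1(\operatorname{G}_k,\operatorname{Aut}(\overline{V}))$ by $k^\times/(k^\times)^m$, and then, for each cohomology class, write down an explicit diagonal matrix $D\in\operatorname{PGL}_{n+1}(\overline{k})$ that trivializes the corresponding cocycle inside $\operatorname{PGL}_{n+1}(\overline{k})$. The desired diagonal model of the twist will come out of this construction by the twisting recipe of Theorem \ref{lem1}.

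For the first step, since $\psi=\operatorname{diag}(1,\zeta_m^{a_1},\ldots,\zeta_m^{a_n})$ has order exactly $m$, the map $\langle\psi\rangle\to\mu_m$, $\psi\mapsto\zeta_m$, is an isomorphism of abstract groups. The Galois action on $\operatorname{Aut}(\overline{V})\subset\operatorname{PGL}_{n+1}(\overline{k})$ is induced by its action on matrix entries, so if $\chi:\operatorname{G}_k\to(\mathbb{Z}/m)^\times$ denotes the mod-$m$ cyclotomic character one gets $\sigma(\psi)=\operatorname{diag}(1,\zeta_m^{\chi(\sigma)a_1},\ldots,\zeta_m^{\chi(\sigma)a_n})=\psi^{\chi(\sigma)}$, which matches $\sigma(\zeta_m)=\zeta_m^{\chi(\sigma)}$ on the target. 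Thus the isomorphism is $\operatorname{G}_k$-equivariant and Hilbert 90 gives $\operatorname{H}^1(\operatorname{G}_k,\operatorname{Aut}(\overline{V}))\cong\operatorname{H}^1(\operatorname{G}_k,\mu_m)\cong k^\times/(k^\times)^m$.

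For the second step, pick a representative $\alpha\in k^\times$ of a class, choose $\beta\in\overline{k}$ with $\beta^m=\alpha$, and write the associated cocycle as $\sigma\mapsto\sigma(\beta)/\beta=\zeta_m^{c(\sigma)}$. Define the diagonal matrix
\[ D:=\operatorname{diag}(1,\beta^{a_1},\ldots,\beta^{a_n})\in\operatorname{PGL}_{n+1}(\overline{k}). \]
A direct computation yields $D^{-1}\sigma(D)=\operatorname{diag}(1,\zeta_m^{c(\sigma)a_1},\ldots,\zeta_m^{c(\sigma)a_n})=\psi^{c(\sigma)}$, so $D$ splits the cocycle in $\operatorname{PGL}_{n+1}(\overline{k})$. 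By the twisting recipe of Theorem \ref{lem1}, the twist $V'$ is $k$-isomorphic to the hypersurface cut out by $F_V(D(X_0,\ldots,X_n))=0$. To check descent, apply $\sigma$ and use $F_V\in k[X_0,\ldots,X_n]$: one obtains $\sigma(F_V(DX))=F_V(\sigma(D)X)=F_V(D\psi^{c(\sigma)}X)$, which equals $F_V(DX)$ up to a nonzero scalar because $\psi\in\operatorname{Aut}(\overline{V})$ preserves the projective equation. Hence, after normalization, $F_{D^{-1}V}\in k[X_0,\ldots,X_n]$, and every class in $\operatorname{Twist}_k(V)$ is exhibited as a diagonal twist (with $M=I$).

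Finally, because each such twist is realized in the standard $\mathbb{P}^n_k$, its image under the map $\Sigma$ of Theorem \ref{lem1} is the class $[\mathbb{P}^n_k]$, so $\Sigma$ is trivial. The step I expect to require the most care is the Galois-equivariance of $\operatorname{Aut}(\overline{V})\cong\mu_m$: it depends essentially on $\psi$ being diagonal with entries that are $m$-th roots of unity, since for an arbitrary cyclic subgroup of $\operatorname{PGL}_{n+1}(\overline{k})$ the Galois action could twist by a character different from $\chi$, forcing a modification of the Kummer parametrization. A secondary technicality is the projective (as opposed to affine) nature of $\operatorname{PGL}_{n+1}$, which obliges one to handle the cocycle identity $D^{-1}\sigma(D)=\psi^{c(\sigma)}$ and the descent of $F_V(DX)$ only up to scalars throughout the verification.
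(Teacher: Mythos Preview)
Your argument is correct and reaches the same conclusion, but it is organized differently from the paper's proof. The paper observes that the diagonal generator $\psi$ lifts the whole embedding $\operatorname{Aut}(\overline V)\hookrightarrow\operatorname{PGL}_{n+1}(\overline k)$ through $\operatorname{GL}_{n+1}(\overline k)$, so $\Sigma$ factors through $\operatorname{H}^1(k,\operatorname{GL}_{n+1}(\overline k))=1$ by the matrix form of Hilbert~90; this immediately gives the triviality of $\Sigma$, and the diagonal shape of the model is then obtained by a direct entry-by-entry analysis of any $P\in\operatorname{PGL}_{n+1}(\overline k)$ with $P\,{}^{\sigma}P^{-1}\in\langle\psi\rangle$, yielding a factorization $P=MD$ with $M\in\operatorname{PGL}_{n+1}(k)$ and $D$ diagonal. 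You instead identify $\operatorname{Aut}(\overline V)\cong\mu_m$ as $\operatorname{G}_k$-modules, invoke Kummer theory to parametrize $\operatorname{H}^1(k,\operatorname{Aut}(\overline V))$ by $k^\times/(k^\times)^m$, and for each class $\alpha$ write down the explicit coboundary $D=\operatorname{diag}(1,\beta^{a_1},\ldots,\beta^{a_n})$ with $\beta^m=\alpha$. Your route has the advantage of producing a concrete parametrization of $\operatorname{Twist}_k(V)$ together with an explicit formula for $D$ (and shows one may take $M=I$ from the outset); the paper's route is a bit more structural, needing only the existence of a lift to $\operatorname{GL}_{n+1}$ rather than the precise Kummer identification. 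Both uses of ``Hilbert~90'' are legitimate but distinct: yours is the classical $\operatorname{H}^1(k,\mu_m)\cong k^\times/(k^\times)^m$, theirs is $\operatorname{H}^1(k,\operatorname{GL}_{n+1}(\overline k))=1$.
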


\begin{rem}\label{lem412} Let $V/k$ be a smooth hypersurface over a perfect field $k$
of characteristic $p\geq 0$, and identify it with a non-singular
hypersurface model $H_{V,d,n}:F_{V}(X_0,\ldots,X_n)=0$ over $k$ with
$d\geq 4$ and $(n,d)\neq(3,4)$. Suppose also that
$\operatorname{Aut}(H_{V,d,n}\otimes_k\overline{k})\subseteq
\operatorname{PGL}_{n+1}(\overline{k})$ is a cyclic group of order
$n$, generated by a matrix $\psi$ whose conjugacy class in
$\operatorname{PGL}_{n+1}(k)$ does not contain elements of diagonal
shapes. Then, the twists of $V$ over $k$ whose image under $\Sigma$
is trivial (i.e., the ones that are smooth hypersurfaces over $k$),
are expected not to be represented by diagonal twists. For example,
for $n=2$ in \cite{badr2016twists}, we provide a smooth plane curve
with a cyclic non-diagonal automorphism group in the above sense,
where not all of its twists are diagonal. To construct such examples
in higher dimensional $\mathbb{P}^n$, it requires a knowledge about
the structure of automorphism groups and also the Twisting Theory
for smooth hypersurfaces living there.
\end{rem}
\end{section}
\begin{section}{Brauer-Severi varieties and central simple algebras}
Let $U$ be a quasi-projective variety defined over a perfect field
$k$. A twist for $U$ is a variety $U'$ defined over $k$ that is
isomorphic over $\overline{k}$ to $U$, but not necessarily over $k$.
A twist $U'$ is called trivial if $U$ and $U'$ are $k$-isomorphic.
The set of all twists of $U$ modulo $k$-isomorphisms is denoted by
$\operatorname{Twist}_k(U)$. It is well-known that the set
$\operatorname{Twist}_k(U)$ is in one-to-one correspondence with the
first Galois cohomology set
$\operatorname{H}^1(k,\operatorname{Aut}(U\otimes_k\overline{k}))$
given by $[U]\mapsto \xi:\tau\mapsto\xi_{\tau}:=\phi\circ\,
^{\tau}\phi^{-1},$ for $\tau\in G_k$, where
$\phi:U'\otimes_k\overline{k}\rightarrow U\otimes_k\overline{k}$ is
a fixed $\overline{k}$-isomorphism, see \cite[Chp. III ]{MR1324577}.
\subsection*{Brauer-Severi varieties} A Brauer-Severi variety $\mathcal{B}$ of dimension $n$ over a perfect field $k$ is simply a twist of the $n$-dimensional projective space $\mathbb{P}^n_k$ over $k$. A field extension $L/k$ is said to be a splitting field of $\mathcal{B}$, if $\mathcal{B}\otimes_k L\simeq\mathbb{P}^n_L$, and we say that $L/k$ splits $\mathcal{B}$ or that $\mathcal{B}$ splits over $L$.

In particular, we obtain:
\begin{cor}\label{301}
The set $\operatorname{Twist}_k(\mathbb{P}^n_k)$ is in bijection with $\operatorname{H}^1(k,\operatorname{Aut}(\mathbb{P}^n_{\overline{k}}))=\operatorname{H}^1(k,\operatorname{PGL}_{n+1}(\overline{k}))$.
\end{cor}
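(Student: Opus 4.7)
The statement is an immediate specialization of the general twist–cohomology dictionary that the paper already recalls (quoting Silverman, Chapter III). My plan has essentially three short steps, so the ``proof'' is really an assembly of known facts rather than a construction.

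First, I would apply the general bijection stated at the beginning of Section 3: for any quasi-projective variety $U/k$, one has
\[
\operatorname{Twist}_k(U)\;\longleftrightarrow\;\operatorname{H}^1(k,\operatorname{Aut}(U\otimes_k\overline{k})),\qquad [U']\mapsto\bigl(\tau\mapsto \phi\circ {}^{\tau}\!\phi^{-1}\bigr),
\]
where $\phi:U'\otimes_k\overline{k}\to U\otimes_k\overline{k}$ is any fixed $\overline{k}$-isomorphism. Since a Brauer--Severi variety of dimension $n$ over $k$ is by definition a twist of $\mathbb{P}^n_k$, applying this bijection to $U=\mathbb{P}^n_k$ already produces a bijection between Brauer--Severi varieties of dimension $n$ over $k$ (up to $k$-isomorphism) and $\operatorname{H}^1(k,\operatorname{Aut}(\mathbb{P}^n_{\overline{k}}))$.

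Second, I would identify the coefficient group. The automorphism group of projective space is classical: any automorphism of $\mathbb{P}^n_{\overline{k}}$ as a variety is a projective linear transformation, so
\[
\operatorname{Aut}(\mathbb{P}^n_{\overline{k}})=\operatorname{PGL}_{n+1}(\overline{k}),
\]
and this identification is $\operatorname{G}_k$-equivariant because Galois acts on $\mathbb{P}^n_{\overline{k}}$ through its action on the coordinates, which on $\operatorname{PGL}_{n+1}(\overline{k})$ is entry-wise on matrices. Consequently the cohomology sets on the two sides coincide as pointed sets with $\operatorname{G}_k$-action, giving the displayed equality
\[
\operatorname{H}^1(k,\operatorname{Aut}(\mathbb{P}^n_{\overline{k}}))=\operatorname{H}^1(k,\operatorname{PGL}_{n+1}(\overline{k})).
\]

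Combining these two steps yields the corollary. There is no real obstacle: the only nontrivial ingredient is the identification $\operatorname{Aut}(\mathbb{P}^n_{\overline{k}})=\operatorname{PGL}_{n+1}(\overline{k})$, which is standard, and the bijection with $\operatorname{H}^1$ is exactly the general principle recalled at the top of Section 3. I would therefore present the corollary in one short paragraph invoking these two facts, with a pointer to \cite[Chp. III]{MR1324577} for the twist--cohomology correspondence.
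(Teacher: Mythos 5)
Your proposal is correct and matches the paper's (implicit) argument exactly: the corollary is stated there as an immediate consequence of the general bijection $\operatorname{Twist}_k(U)\leftrightarrow\operatorname{H}^1(k,\operatorname{Aut}(U\otimes_k\overline{k}))$ recalled from \cite[Chp. III]{MR1324577}, applied to $U=\mathbb{P}^n_k$ together with the standard Galois-equivariant identification $\operatorname{Aut}(\mathbb{P}^n_{\overline{k}})=\operatorname{PGL}_{n+1}(\overline{k})$. No discrepancy worth noting.
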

Moreover, we have:
\begin{thm}[Severi, Ch\^{a}telet, Lichtenbaum]\label{302}
Let $\mathcal{B}$ be a Brauer-Severi variety of dimension $n$ over a
perfect field $k$. Then, there exists a field extension $L/k$ of
degree $[L:k]\,|\,n+1$ such that $L$ splits $\mathcal{B}$. Moreover,
$\mathcal{B}$ splits over $k$, if it has $k$-rational points or
contains a hypersurface of degree relatively prime with $n+1$.
\end{thm}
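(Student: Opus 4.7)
The plan is to recast everything via the classical dictionary between Brauer--Severi varieties of dimension $n$ and central simple $k$-algebras of degree $n+1$, and then read off both assertions from Artin--Wedderburn together with the Picard-to-Brauer exact sequence. By Corollary~\ref{301}, $\mathcal{B}$ is classified by a class in $\operatorname{H}^1(k,\operatorname{PGL}_{n+1}(\overline{k}))$. The short exact sequence of $\operatorname{G}_k$-groups $1\to\overline{k}^{\times}\to\operatorname{GL}_{n+1}(\overline{k})\to\operatorname{PGL}_{n+1}(\overline{k})\to1$ combined with Hilbert~90 yields an injection
$$\operatorname{H}^1(k,\operatorname{PGL}_{n+1}(\overline{k}))\hookrightarrow \operatorname{H}^2(k,\overline{k}^{\times})=\operatorname{Br}(k),$$
whose image lies in $\operatorname{Br}(k)[n+1]$. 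Thus $\mathcal{B}$ corresponds to a central simple $k$-algebra $A$ of degree $n+1$, and $\mathcal{B}\cong\mathbb{P}^n_k$ holds if and only if $[A]=0$ in $\operatorname{Br}(k)$; the theorem then reduces to controlling when this class vanishes or admits a small-degree splitting.

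For the splitting-field assertion, I would decompose $A\cong M_r(D)$ with $D$ a central division $k$-algebra of degree $s$ and $rs=n+1$, and pick a maximal commutative subfield $L\subset D$. Standard CSA theory gives $[L:k]=s$ and that $L$ splits $D$; hence $A\otimes_k L$ is a matrix algebra, so $\mathcal{B}\otimes_k L\cong\mathbb{P}^n_L$ with $[L:k]=s\mid n+1$. For the two sufficient splitting conditions, I would proceed case-by-case. If $\mathcal{B}(k)\neq\emptyset$, I would identify $\mathcal{B}(L)$ with the set of right ideals of $A\otimes_k L$ of reduced rank $1$ (equivalently, of $L$-dimension $n+1$). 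In $A\cong M_r(D)$, every nonzero right ideal has $k$-dimension a positive multiple of $rs^2=(n+1)s$, so a reduced-rank-$1$ ideal forces $s=1$, i.e. $A$ is split. If instead $\mathcal{B}$ contains a hypersurface $Y\subset\mathcal{B}$ of degree $d$ defined over $k$ with $\gcd(d,n+1)=1$, I would invoke the Hochschild--Serre low-degree sequence for $\mathbb{G}_m$,
$$0\to\operatorname{Pic}(\mathcal{B})\to\operatorname{Pic}(\mathcal{B}\otimes_k\overline{k})^{\operatorname{G}_k}\xrightarrow{\partial}\operatorname{Br}(k),$$
where $\operatorname{Pic}(\mathcal{B}\otimes_k\overline{k})=\mathbb{Z}\cdot\mathcal{O}(1)$ has trivial $\operatorname{G}_k$-action and $\partial(\mathcal{O}(1))=[A]$. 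The class of $Y$ in $\operatorname{Pic}(\mathcal{B})$ maps to $d\cdot\mathcal{O}(1)$, so $d\cdot[A]=0$; combined with $(n+1)\cdot[A]=0$ and coprimality, this forces $[A]=0$.

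The main obstacle is rigorously setting up the two correspondences doing the heavy lifting: the $L$-point description $\mathcal{B}(L)=\{\text{right ideals of }A\otimes_k L\text{ of reduced rank }1\}$ (proved by Galois descent from the split case, where it reduces to the classical lines-in-$L^{n+1}$ picture), and the identification $\partial(\mathcal{O}(1))=[A]$ of the Picard-to-Brauer connecting map with the Brauer class already attached to $\mathcal{B}$. Both are classical but require some unpacking of the descent machinery from Section~3; once they are granted, the remainder of the argument is bookkeeping with exact sequences and standard CSA structure (Artin--Wedderburn plus the existence of a maximal subfield of a division algebra of the correct degree).
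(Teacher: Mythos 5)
Your argument is sound, but it is worth noting that the paper does not actually prove Theorem~\ref{302}: its ``proof'' is a pure attribution, citing Severi, Ch\^atelet's thesis, Lichtenbaum (via \cite[Theorem 5.4.10]{MR2266528}) and \cite[Theorem 5]{roe2014galois}. What you have written is, in effect, a reconstruction of those classical arguments rather than a new route: the degree-$s$ maximal subfield of the division algebra $D$ in $A\cong M_r(D)$ gives the splitting field with $[L:k]=s\mid n+1$; the identification of $\mathcal{B}(k)$ with reduced-rank-one right ideals of $A$ (Ch\^atelet's point of view) gives the rational-point criterion, since any nonzero right ideal of $M_r(D)$ has $k$-dimension a multiple of $(n+1)s$; and the Hochschild--Serre sequence $0\to\operatorname{Pic}(\mathcal{B})\to\operatorname{Pic}(\mathcal{B}\otimes_k\overline{k})^{\operatorname{G}_k}\to\operatorname{Br}(k)$ with $\partial(\mathcal{O}(1))=[A]$ (Lichtenbaum's theorem) gives $d\,[A]=0$, which together with $(n+1)[A]=0$ and $\gcd(d,n+1)=1$ forces $[A]=0$. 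The two ingredients you flag as needing unpacking --- the ideal-theoretic description of points and the computation of the connecting map on $\mathcal{O}(1)$ --- are exactly the content of the cited references, so your proposal is correct modulo those standard facts (the sign ambiguity in $\partial(\mathcal{O}(1))=\pm[A]$ is harmless, and over a perfect field the separability of the maximal subfield is automatic). The trade-off is the expected one: the paper's treatment is shorter and defers entirely to the literature, while your version makes the mechanism visible and self-contained up to the two classical dictionaries, which is arguably more informative in the context of Section~3, where the cohomological identification of Brauer--Severi varieties with classes in $\operatorname{H}^1(k,\operatorname{PGL}_{n+1}(\overline{k}))$ is already set up.
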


\begin{proof}
The result is due to Severi (cf. \cite[X, \S6, Excercise 1]{MR0354618}), Ch\^{a}telet (cf. his PhD thesis \cite{MR3533116}), and Lichtenbaum (cf. \cite[Theorem 5.4.10]{MR2266528}). One also can read the proof of \cite[Theorem 5]{roe2014galois}.
\end{proof}

We conclude from \cite[Lemma 4]{roe2014galois}
 the following.
\begin{thm}[Ro\'e-Xarles]\label{proprx} Let $V$ be a smooth projective variety over a perfect field $k$. Suppose that, for some fixed $n\geq 2$, there is a unique (modulo automorphisms) $n$-dimensional
 linear series over $\overline{k}$ invariant under the $\operatorname{Gal}(\overline{k}/k)$-action, giving a morphism $h:V\otimes_k\overline{k}\rightarrow\mathbb{P}^n_{\overline{k}}$. Then, there exists
a Brauer-Severi variety $\mathcal{B}$ of dimension $n$ defined over
$k$, together with a $k$-morphism $g:V\hookrightarrow \mathcal{B}$,
such that
$g\otimes_k\overline{k}:V\otimes_k\overline{k}\rightarrow\mathbb{P}^n_{\overline{k}}$
equals to $h$.
\end{thm}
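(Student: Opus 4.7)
The strategy is to extract a $1$-cocycle in $\operatorname{PGL}_{n+1}(\overline{k})$ from the Galois action on $h$, to identify the corresponding Brauer-Severi variety via Corollary \ref{301}, and then to descend $h$ through the resulting twist.

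First, I would record the data behind $h$. Writing $\overline{V}:=V\otimes_k\overline{k}$, the $n$-dimensional linear series corresponds to a pair $(\mathcal{L},W)$ with $\mathcal{L}$ a line bundle on $\overline{V}$ and $W\subseteq H^0(\overline{V},\mathcal{L})$ an $(n+1)$-dimensional subspace. A choice of basis of $W$ yields the morphism $h:\overline{V}\rightarrow \mathbb{P}^n_{\overline{k}}$, and replacing the basis acts as postcomposition with an element of $\operatorname{PGL}_{n+1}(\overline{k})$, which is precisely the ambiguity recorded in the hypothesis.

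Next, for each $\tau\in \operatorname{G}_k$, applying Galois conjugation to the defining sections produces a morphism $h^\tau:\overline{V}\rightarrow \mathbb{P}^n_{\overline{k}}$, associated with the conjugate pair $(\mathcal{L}^\tau,W^\tau)$. The $\operatorname{G}_k$-invariance of the linear series, combined with its uniqueness modulo $\operatorname{PGL}_{n+1}(\overline{k})$, yields an element $\xi_\tau\in \operatorname{PGL}_{n+1}(\overline{k})$ with $h^\tau=\xi_\tau\circ h$. This $\xi_\tau$ is unique because the image $h(\overline{V})$ is not contained in any hyperplane of $\mathbb{P}^n_{\overline{k}}$ (the series is $n$-dimensional), so an element of $\operatorname{PGL}_{n+1}(\overline{k})$ is determined by its restriction to $h(\overline{V})$. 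I would then verify that $\tau\mapsto \xi_\tau$ is a continuous $1$-cocycle in $Z^1(\operatorname{G}_k,\operatorname{PGL}_{n+1}(\overline{k}))$: continuity is immediate since the defining sections of $h$ lie in a finite extension of $k$ inside $\overline{k}$, and the cocycle identity follows from comparing the two equalities $(h^\sigma)^\tau=h^{\tau\sigma}=\xi_{\tau\sigma}\circ h$ and $(h^\sigma)^\tau=\xi_\sigma^\tau\circ \xi_\tau\circ h$ and invoking uniqueness.

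Then, Corollary \ref{301} identifies the class $[\xi]\in \operatorname{H}^1(k,\operatorname{PGL}_{n+1}(\overline{k}))$ with a Brauer-Severi variety $\mathcal{B}$ of dimension $n$ over $k$, equipped with a $\overline{k}$-isomorphism $\phi:\mathcal{B}\otimes_k\overline{k}\simeq \mathbb{P}^n_{\overline{k}}$ whose Galois twisting cocycle is exactly $\xi$. Setting $g:=\phi^{-1}\circ h$, the identity $h^\tau=\xi_\tau\circ h$ translates into $g$ being $\operatorname{G}_k$-equivariant with respect to the natural action on $\mathcal{B}\otimes_k\overline{k}$, and Galois descent for morphisms of quasi-projective varieties over the perfect field $k$ produces a $k$-morphism $g:V\hookrightarrow \mathcal{B}$ such that $g\otimes_k\overline{k}$ is identified with $h$ through $\phi$. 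The step I expect to be the main hurdle is the unambiguous construction of $\xi_\tau$: once this uniqueness is established via the spanning property of the image of $h$, both the cocycle condition and the descent step become formal. The hypothesis that the invariant linear series is \emph{unique} modulo $\operatorname{PGL}_{n+1}(\overline{k})$ is essential precisely at this point, since it ensures that the conjugate linear series differs from the original by at most one element of $\operatorname{PGL}_{n+1}(\overline{k})$.
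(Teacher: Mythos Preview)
The paper does not supply its own proof of this statement: it is quoted verbatim as a consequence of \cite[Lemma 4]{roe2014galois} and attributed to Ro\'e--Xarles, so there is no in-paper argument to compare against. Your sketch is the standard Galois-descent construction one expects behind that lemma, and it is correct in substance.

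One small point worth tidying: with your convention $h^\tau=\xi_\tau\circ h$ and $(h^\sigma)^\tau=h^{\tau\sigma}$, the identity you actually obtain is $\xi_{\tau\sigma}={}^{\tau}\xi_\sigma\cdot\xi_\tau$, which is the cocycle relation for $\xi^{-1}$ rather than for $\xi$ under the paper's left-action convention. This is harmless---either pass to $\eta_\tau:=\xi_\tau^{-1}$ or set up the comparison as $h=\xi_\tau\circ h^\tau$ from the start---but it should be kept consistent when you invoke Corollary \ref{301} and match the twisting cocycle of $\phi:\mathcal{B}\otimes_k\overline{k}\simeq\mathbb{P}^n_{\overline{k}}$.
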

\subsection*{Central simple algebras} A central simple algebra over a field $k$ is a finite dimensional 
associative 
algebra over $k$, which is simple, i.e contains no non-trivial (two sided) ideal and the multiplication operation is not uniformly zero, 
and for which the center 
is exactly $k$.

A field extension $L/k$ is said to be a splitting field of a central simple algebra $\mathcal{A}$ over $k$, if $A\otimes_k L\simeq M_n(L)$ for some $n$, and we say that $L/k$ splits $\mathcal{A}$.

\begin{example}[Cyclic algebras]\label{parametrization1}
Let $L/k$ be a cyclic extension of degree $n+1$ with $\operatorname{Gal}(L/k)=\langle\sigma\rangle$.
 Then, an element of $\operatorname{H}^1(\operatorname{Gal}(L/k),\operatorname{PGL}_{n+1}(L))$ represented by a $1$-cocycle $f:\operatorname{Gal}(L/k)\rightarrow\operatorname{PGL}_{n+1}(L)$ is completely determined by the value of $f(\sigma)$, which is subject to
$$
f(\sigma)\cdot\,^{\sigma}(f(\sigma))\cdot\,^{\sigma^2}(f(\sigma))\cdot\ldots\cdot\,^{\sigma^{n}}(f(\sigma))=1.
$$
For instance, let $a\in k^*$ and consider the matrices
$$C_a:=\left(
         \begin{array}{ccccc}
           0 & 0 & \ldots & 0 & a \\
           1 & 0 & \ldots & 0 & 0 \\
           0 & 1 & \ldots & 0 & 0 \\
           \vdots &  &  &  &  \\
           0 & 0 & \ldots & 1 & 0 \\
         \end{array}
       \right); D_a:=\left(
         \begin{array}{cccccc}
           0 & 1 & 0&\ldots & 0 & 0 \\
           0 & 0 & 1&\ldots & 0 & 0 \\
           0 & 0 & 0&\ldots & 0 & 0 \\
           \vdots &\vdots&  &  &  &  \\
           0&0&\ldots&\ldots&0&1\\
           a & 0 & \ldots &\ldots& 0 & 0 \\
         \end{array}
       \right).$$

Define a $1$-cocycle $f$ by setting $f(\sigma)=C_a\,\modu L^*$. Hence
\begin{eqnarray*}
f(\sigma)\cdot\,^{\sigma}(f(\sigma))\cdot\,^{\sigma^2}(f(\sigma))\cdot\ldots\cdot\,^{\sigma^{n}}(f(\sigma))=C_a^{n+1}\,\modu L^*=aI\,\modu L^*=I\,\modu L^*,
\end{eqnarray*}
where $I$ is the identity matrix. According to \cite[Theorem
5.4]{tengan2009central}, we can associate to this $1$-cocycle
central simple algebra $\mathcal{A}$ over $k$ of dimension $(n+1)^2$
that splits by $L$, by considering the set of matrices $M\in
M_{n+1}(L)$ satisfying $C_a\,^{\sigma}M\,C_a^{-1}=M$. One finds that
the matrices $I,C_a,\ldots,C_a^{n}\in\mathcal{A}$ as well as
$$S_b:=\operatorname{diag}(b,\sigma(b),\ldots,\sigma^{n}(b)),\,\,\text{for}\,\,b\in
L.$$ Therefore, $\bigoplus S_bC_a^i$ is a $k$-subalgebra of the
correct dimension $(n+1)^2$, and corresponds to the algebra
$\mathcal{A}$ defined by the $1$-cocycle $f$. This kind of
$k$-algebras are also known as the \emph{cyclic algebra $(\chi,a)$}
associated to $a\in k^*$ and the character
$\chi:\text{Gal}(L/k){\stackrel{\simeq}{\longrightarrow}}\Z/(n+1)\Z$
sending $\chi(\sigma)=-1\modu (n+1)$.

In the above computations, we may replace $C_a$ with $D_a$ and we
get symmetrically the cyclic algebra $(\chi,a)$ associated to $a\in
k^*$ and the character
$\chi:\text{Gal}(L/k){\stackrel{\simeq}{\longrightarrow}}\Z/(n+1)\Z$
sending $\chi(\sigma)=1\modu (n+1)$, since $C_a
S_b=\,^{\sigma^{-1}}S_b\, C_a$ is changed to $D_a S_b=\,^{\sigma}S_b
\, D_a$,

For the complete details, we refer to \cite[Example 5.5]{tengan2009central}.
\end{example}

\begin{thm}[J. H. Maclagan-Wedderburn, R. Brauer]\label{wedderburnthm100}
Given a central simple algebra $\mathcal{A}$ over $k$, there exists (up to isomorphism) a unique division algebra $\mathcal{D}$ with center $k$ and a positive integer $n$, such that $\mathcal{A}$ is isomorphic to $M_n(\mathcal{D})$. Consequently, the dimension $\operatorname{dim}_k(\mathcal{A})$ of $\mathcal{A}$ over $k$ is always a square.
\end{thm}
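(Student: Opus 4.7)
The plan is to deduce the statement from the classical Wedderburn structure theorem for simple artinian rings and then use scalar extension to the algebraic closure to obtain the square-dimension conclusion. Since $\mathcal{A}$ is finite-dimensional over $k$, it is (left) artinian, so the descending chain condition holds and $\mathcal{A}$ possesses a minimal nonzero left ideal, call it $I$.

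First, I would observe that $I$ is a simple left $\mathcal{A}$-module, so by Schur's lemma the ring $\mathcal{D}^{op}:=\operatorname{End}_{\mathcal{A}}(I)$ is a division algebra over $k$. Viewing $I$ as a right $\mathcal{D}^{op}$-module (equivalently, a left $\mathcal{D}$-module), one checks it is finite-dimensional, say of dimension $n$, and the natural left multiplication map $\mathcal{A}\to\operatorname{End}_{\mathcal{D}^{op}}(I)$ is injective (because $\mathcal{A}$ is simple, its kernel is a two-sided ideal, hence zero) and surjective by the Jacobson density theorem (or the Rieffel argument), giving $\mathcal{A}\cong\operatorname{End}_{\mathcal{D}^{op}}(I)\cong M_n(\mathcal{D})$. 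For uniqueness of the pair $(\mathcal{D},n)$: any two simple left modules over $M_n(\mathcal{D})$ are isomorphic (to $\mathcal{D}^n$), so $\mathcal{D}^{op}$ is recovered up to isomorphism as the endomorphism ring of a simple $\mathcal{A}$-module, and $n$ is then determined by dimension count.

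Next, I would identify the center. A direct matrix computation shows $Z(M_n(\mathcal{D}))=Z(\mathcal{D})\cdot I_n$, so the hypothesis $Z(\mathcal{A})=k$ forces $Z(\mathcal{D})=k$; thus $\mathcal{D}$ is a division algebra with center exactly $k$, as required.

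For the square-dimension statement, the main step is to pass to an algebraic closure $\overline{k}$. The algebra $\mathcal{D}\otimes_k\overline{k}$ is again a central simple algebra (centrality and simplicity are preserved under base change to a field extension for central simple algebras, a standard fact), hence by the first part of the theorem applied over $\overline{k}$ it is isomorphic to $M_r(\mathcal{D}')$ for some division algebra $\mathcal{D}'$ over $\overline{k}$; but a finite-dimensional division algebra over an algebraically closed field must equal that field, so $\mathcal{D}'=\overline{k}$ and $\mathcal{D}\otimes_k\overline{k}\cong M_r(\overline{k})$. Counting dimensions gives $\dim_k\mathcal{D}=r^2$, and therefore $\dim_k\mathcal{A}=n^2\dim_k\mathcal{D}=(nr)^2$. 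The hard part, and the only place where the argument is not purely formal module theory, is this last base-change step: one has to know that simplicity and centrality survive the tensor product $-\otimes_k\overline{k}$, which rests on the fact that for a central simple $k$-algebra $\mathcal{A}$ and an arbitrary $k$-algebra $\mathcal{B}$, the two-sided ideals of $\mathcal{A}\otimes_k\mathcal{B}$ correspond bijectively to those of $\mathcal{B}$.
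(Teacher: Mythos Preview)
The paper does not give a proof of this theorem at all: it is stated as a classical result, attributed in the theorem heading to Wedderburn and Brauer, and is used purely as background for the discussion of the Brauer group. So there is no ``paper's own proof'' to compare against.

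Your argument is the standard one and is essentially correct: artinianity gives a minimal (hence simple) left ideal $I$; Schur's lemma makes $\operatorname{End}_{\mathcal{A}}(I)$ a division ring; the Jacobson density (or Rieffel) argument identifies $\mathcal{A}$ with the full endomorphism ring of $I$ over that division ring; uniqueness follows since simple $\mathcal{A}$-modules are pairwise isomorphic; and the center computation plus base change to $\overline{k}$ yields the square-dimension conclusion. One small bookkeeping point: with your convention $\mathcal{D}^{op}=\operatorname{End}_{\mathcal{A}}(I)$ and $I$ a right $\mathcal{D}^{op}$-module of rank $n$, the endomorphism ring $\operatorname{End}_{\mathcal{D}^{op}}(I)$ is naturally $M_n(\mathcal{D}^{op})$ rather than $M_n(\mathcal{D})$; this is harmless for the statement (both $\mathcal{D}$ and $\mathcal{D}^{op}$ are central division algebras over $k$, and in fact $M_n(\mathcal{D})\cong M_n(\mathcal{D}^{op})^{op}\cong\mathcal{A}^{op}\cong\mathcal{A}$ via transpose since $\mathcal{A}$ is central simple), but you may want to align the ``op''s consistently.
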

Theorem \ref{wedderburnthm100} gives a strict relation between
central simple algebras and division algebras, and suggests the
introduction of the following equivalence relation: Two central
simple algebras $\mathcal{A}_1$ and $\mathcal{A}_2$ over the same
field $k$ are equivalent if there are positive integers $m$, $n$
such that $M_m(\mathcal{A}_1)\simeq M_n(\mathcal{A}_2)$.
Equivalently, $\mathcal{A}_1$ and $\mathcal{A}_2$ are equivalent if
$\mathcal{A}_1$ and $\mathcal{A}_2$ are matrix algebras over a
division algebra (up to isomorphism).
\begin{defn}
The set of all Brauer equivalence classes of central simple algebras
over $k$ equipped with the tensor product of $k$-algebras is an
abelian group (cf. \cite[Proposition 2.4.8]{MR2266528}), known as
the Brauer group of $k$ and is denoted by $\operatorname{Br}(k)$.
The \emph{period} of a central simple algebra over $k$ is defined to
be its order as an element of the Brauer group. The $m$-torsion
$\operatorname{Br}(k)[m]$ of the Brauer group $\operatorname{Br}(k)$
is the set of all elements of $\operatorname{Br}(k)$ of order dividing $m$.
\end{defn}
Recall that each Brauer equivalence class contains a unique (up to
isomorphism) division algebra. Define the \emph{index} of a central
simple algebra to be the degree of the division algebra $D$ that is
Brauer equivalent to it, i.e. the square root of the dimension of $D$ over $k$.

We particularly have:
\begin{cor}[cf. \cite{MR2060023}]\label{periodindex}
The period of a central simple algebra over $k$ divides its index, and hence is finite.
\end{cor}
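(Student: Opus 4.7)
The plan is to reduce to the case of a division algebra and then produce a splitting field of degree equal to the index. By Theorem \ref{wedderburnthm100}, any central simple $k$-algebra $\mathcal{A}$ satisfies $\mathcal{A} \cong M_m(\mathcal{D})$ for a unique (up to isomorphism) central division $k$-algebra $\mathcal{D}$. Since $M_1(\mathcal{A}) \cong M_m(\mathcal{D})$, the algebras $\mathcal{A}$ and $\mathcal{D}$ are Brauer equivalent, so $[\mathcal{A}] = [\mathcal{D}]$ in $\operatorname{Br}(k)$ and they share the same period; moreover, by the definition recalled just before the corollary, $\operatorname{ind}(\mathcal{A}) = \sqrt{\dim_k \mathcal{D}} =: n$. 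Hence it suffices to prove that the period of $\mathcal{D}$ divides $n$.

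The next step is to exhibit a splitting field of degree $n$. Here I would invoke the classical structural fact that $\mathcal{D}$ admits a maximal commutative subfield $L$, which is necessarily separable over $k$ (this uses that $k$ is perfect) and satisfies $[L:k] = n$. Such an $L$ is automatically a splitting field for $\mathcal{D}$: $\mathcal{D}\otimes_k L \cong M_n(L)$, because the centralizer of $L$ in $\mathcal{D}$ coincides with $L$ itself, and the action of $\mathcal{D}\otimes_k L$ on $\mathcal{D}$ viewed as an $L$-vector space of dimension $n$ is faithful and $n^2$-dimensional. Consequently, $[\mathcal{D}] \in \ker\!\bigl(\operatorname{res}_{L/k}\colon \operatorname{Br}(k) \to \operatorname{Br}(L)\bigr)$.

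To conclude, I would invoke the restriction-corestriction identity from Galois cohomology: for any finite separable extension $L/k$, the composition $\operatorname{cor}_{L/k} \circ \operatorname{res}_{L/k}$ equals multiplication by $[L:k]$ on $\operatorname{Br}(k) = \operatorname{H}^2(k, \overline{k}^*)$. Applying this to $[\mathcal{D}]$, we get $n \cdot [\mathcal{D}] = \operatorname{cor}_{L/k}(\operatorname{res}_{L/k}([\mathcal{D}])) = \operatorname{cor}_{L/k}(0) = 0$ in $\operatorname{Br}(k)$. Therefore the period of $\mathcal{D}$ (and hence of $\mathcal{A}$) divides $n = \operatorname{ind}(\mathcal{A})$, and is in particular finite since $n$ is a positive integer.

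The main obstacle is expository rather than substantive: the two ingredients just used, namely the existence of a separable maximal subfield of degree equal to the index and the restriction-corestriction formula, are classical but lie outside the narrative of the paper. I would therefore cite \cite{MR2266528} (as the paper already does elsewhere) for both, rather than reproduce the structural theory of central simple algebras here, keeping the argument compact and aligned with the style of this section.
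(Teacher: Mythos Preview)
Your argument is correct and is the classical proof of the period--index relation: reduce to the division algebra via Wedderburn, split by a maximal separable subfield of degree equal to the index, and conclude with the corestriction--restriction formula on $\operatorname{H}^2$. There is no gap.

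As for comparison with the paper: there is nothing to compare. The paper does not prove this corollary at all; it merely records the statement with a pointer to the literature (the tag ``cf.~\cite{MR2060023}'' and, implicitly, \cite{MR2266528}). Your write-up therefore supplies strictly more than the paper does. If you want to match the paper's style for this section, a one-line citation would suffice; if you prefer to keep a proof, your sketch is appropriate, and your instinct to cite \cite{MR2266528} for the two black-box ingredients (existence of a separable maximal subfield and the $\operatorname{cor}\circ\operatorname{res}=[L:k]$ identity) is exactly right.
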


\subsection*{Interplay} We find in the literature several approaches to the connection between central simple algebras and Brauer-Severi varieties; First, the connection between quaternion algebras and plane conics observed by E. Witt in \cite{MR1545510}. In its general form, we mention for example the most elementary
one promoted by J.-P. Serre in his books \cite{MR0354618, MR1324577}. The main observation
is that central simple algebras of dimension $(n+1)^2$ over a perfect field $k$ as well as $n$-dimensional Brauer-Severi varieties over $k$ can both be described by classes in one and the same cohomology set $\operatorname{H}^1(k,\operatorname{PGL}_{n+1}(\overline{k}))$.

\end{section}

\begin{section}{Proofs of the results}
\subsection{A smooth $\overline{k}$-hypersurface not having a non-singular hypersurface model over $k$}

Fix an algebraic closure $\overline{\mathbb{Q}}$ of $\mathbb{Q}$,
and let $k\subset\overline{\mathbb{Q}}$ be a number field. Suppose
that
$$f(t)=t^{n+1}+\lambda_nt^n+\ldots+\lambda_1t+(-1)^{n+1}\lambda_0=\prod_{i=0}^{n}(t-a_i)\in k[t]$$
is an irreducible polynomial of degree $n+1\geq 3$ over $k[t]$,
whose splitting field $k_f$ over $k$ satisfies
$\operatorname{Gal}(k_f/k)\cong\mathbb{Z}/(n+1)\mathbb{Z}$, that is, the inverse Galois problem for cyclic group
$\mathbb{Z}/(n+1)\mathbb{Z}$ is realizable, (recall that the inverse
Galois problem is true for any solvable group over a number field by
the contribution of Shafarevich in 1954). Fix a generator
$$\sigma:a_0\rightarrow a_1\rightarrow
a_2\rightarrow\ldots\rightarrow a_{n-1}\rightarrow a_n\rightarrow
a_0$$ for the Galois group $\operatorname{Gal}(k_f/k).$ Take a
positive integer $d$ not relatively prime with $n+1$, such that the
following holds: there exists $\beta\in k^*$, which is not a norm in
$k_f$, such that
$\lambda_0^{n+1}\beta^d=\prod_{i=0}^{n}\sigma^i(\alpha)$,
equivalently $\beta^d=N_{k_f/k}(\alpha/\lambda_0)$, for some
$\alpha\in k_f^*\setminus k^*$. Next, we define a smooth
hypersurface over $k_f$ by the equation
$$
H_{f,\alpha,d,n}:\lambda_0X_0^d+\sum_{i=1}^{n}\left(\frac{1}{\lambda_0^{i-1}}\prod_{j=0}^{i-1}\sigma^j(\alpha)\right)X_i^d=0,
$$
of degree $d\geq4$ where $(n,d)\neq(3,4)$. The matrix
$$\phi_{\sigma}:=\left(
         \begin{array}{ccccc}
           0 & 0 & \ldots & 0 & \beta \\
           1 & 0 & \ldots & 0 & 0 \\
           0 & 1 & \ldots & 0 & 0 \\
           \vdots &  &  &  &  \\
           0 & 0 & \ldots & 1 & 0 \\
         \end{array}
       \right)
$$
defines an isomorphism $\phi_{\sigma}=C_{\beta}:
H_{f,\alpha,d}\rightarrow \,^{\sigma}H_{f,\alpha,d,n}$ (equivalently, $\phi_{\sigma}^{-1}=D_{\beta^{-1}}:\,^{\sigma}H_{f,\alpha,d,n}\rightarrow
H_{f,\alpha,d,n}$), which satisfies the Weil's condition of decent
    \cite{MR0082726}
    ($\phi_{\sigma^{n+1}}=\phi_{\sigma}^{n+1}=1$). We therefore obtain that the variety is defined over $k$, and that there exists
    an isomorphism $\varphi_0:V_{k}\rightarrow H_{f,\alpha,d,n}$
    where $V_{k}$ is a rational model such that $\psi_{\sigma}=\phi^{-1}_{\sigma}=\varphi_0\circ\,^{\sigma}\varphi_{0}^{-1}\in \text{PGL}_{n+1}(k)$.
The assignation
$\psi_{\tau}:=\varphi_0\circ\,^{\tau}\varphi_{0}^{-1}$ defines an
element of $\text{H}^1(\text{Gal}(k_f/k),\text{PGL}_{n+1}(k_f))$,
corresponding to a cyclic algebra which is non-trivial because
$\beta$ is not a norm of an element of $k_f$ (cf. \cite[\S
2.1]{MR2396201}). Consequently, $\varphi_0$ is not given by an
element of $\text{PGL}_{n+1}(k_f)$, or even
$\text{PGL}_{n+1}(\overline{k}_f)$, since the cohomology class by
the inflation map is not trivial. Therefore, the variety $V_k$ is
not a smooth hypersurface over $k$ (otherwise, $V_k$ is identified
via a $k$-isomorphism $\psi_0$ with a non-singular hypersurface
model defined over $k$. Thus, by \cite[Theorem 1,2]{matsumura1963automorphisms}, the cohomology class $[\varphi_0\circ\psi_0]$ is represented
by an $M\in\operatorname{PGL}_{n+1}(\overline{k}_f)$, which is not
since $[\varphi_0]\neq1$). As a concrete example, we precise the
above construction in $\mathbb{P}^2,\mathbb{P}^3$ and
$\mathbb{P}^4$.

\begin{enumerate}
  \item In $\mathbb{P}^2;$ take $k=\mathbb{Q}$ and consider the irreducible polynomial $f(t)=t^3+12t^2-64$ over $k$ (thus $\lambda_0=64$).
  As we can check with SAGE \cite{sage}, the discriminant of the field $k_f$ is a power of $3$, and the prime $2$ becomes inert in $k_f$, hence is not a norm in $k_f$. Consequently,
  we can assume, for example, that $d=9m-12, \beta=2$ and $\alpha={a_0}8^{m}$ with $m\geq 2$ an integer.
 \item In $\mathbb{P}^3;$ take $k=\Q$ and consider the irreducible polynomial $f(t)=t^4+t^3+2t^2-4t+3$
  over $k$ (thus $\lambda_0=3$). Then, $k_f$ is a
  cyclic extension of $\mathbb{Q}$ with Galois group isomorphic to $\Z/4\Z$ (we can think about it inside $\mathbb{Q}(\zeta_{13})$), moreover, $\lambda_0=3$
  splits completely in $k_f$, so it may be a norm. In the ring of integers $\mathcal{O}_{k_f}$ of $k_f$, $\beta=17=\mathfrak{p}_1\mathfrak{p}_2$, where $\mathfrak{p}_1$ and $\mathfrak{p}_1$ are not principal ideals of $\mathcal{O}_{k_f}$, also one of the generators of $\mathfrak{p}_1$, say $\gamma$, belongs to
  $k_f\setminus \mathbb{Q}$. It is easy to check that
  $N_{k_f/\mathbb{Q}}(\gamma)=17^2$, and we can choose
  $\alpha:=3\cdot 17^m \gamma \in k_f\setminus \mathbb{Q}$ with
  $m\geq 2$, giving degrees $d=4m-2$.
  \item In $\mathbb{P}^4$; the polynomial $f(t)=t^5-t^4-4t^3+3t^2+3t-1$ is irreducible over $\mathbb{Q}$. Its field of decomposition $\Q_f$ is cyclic over $\mathbb{Q}$ of degree $5$ (we can think $\Q_f=\Q(\cos(2\pi/11))\subseteq\mathbb{Q}(\zeta_{11})$). In the ring of integers $\mathcal{O}_{\Q_f}$ of $\Q_f$, the torsion units are $\pm 1$, and the roots $\alpha_i$ of $f$ are units in $\mathcal{O}_{\Q_f}$. Suppose that $m>1$ is an integer satisfying $\operatorname{gcd}(m,n+1=5)=1$ and
$\operatorname{gcd}(\varphi(m),5)=1$, where $\varphi$ is the Euler function. Now,
  take $d=5m$ and $k=\mathbb{Q}(\zeta_d)$ with $\zeta_d$ a fixed $d$-th primitive root of unity inside $\overline{\Q}$. 
      We still have that $f(t)$ is irreducible over $k=\mathbb{Q}(\zeta_d)$, since $k\cap\mathbb{Q}_f=\Q$.
      Also, $k_f$ does not contain torsion roots of unity more than $\langle\zeta_d\rangle$, therefore $\zeta_d$ is not a norm form $k_f$ to $k$. 
      In particular, we can particulary set $\beta=\zeta_d$ and
      $\alpha=a_0$.
\end{enumerate}

\begin{cor}
Let $V$ be a smooth $\overline{k}$-hypersurface over a perfect field
$k$ of degree $d\geq4$ in $\mathbb{P}^n_{\overline{k}}$ such that $(n,d)\neq(3,4)$. Then, $V$ is not
necessarily a smooth hypersurface over $k$.
\end{cor}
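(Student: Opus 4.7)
The plan is to produce an explicit counterexample realizing the ``not necessarily'' clause, organized as a descent-and-obstruction argument. First I would fix a cyclic Galois extension $k_f/k$ of degree $n+1$ with generator $\sigma$---available over any number field by Shafarevich's theorem on solvable groups---arising as the splitting field of an irreducible polynomial $f(t)\in k[t]$ of degree $n+1$. Then I would choose $d\geq 4$ with $\gcd(d,n+1)\neq 1$ (so that Corollary \ref{lem200} does not already force descent), together with $\beta\in k^*$ which is deliberately \emph{not} a norm from $k_f$, and $\alpha\in k_f^*\setminus k^*$ satisfying the norm identity $\beta^d=N_{k_f/k}(\alpha/\lambda_0)$. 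The resulting diagonal hypersurface $H_{f,\alpha,d,n}$ is smooth over $k_f$ by direct inspection of its Jacobian, since all coefficients are nonzero.

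Next I would carry out Weil descent. The norm identity forces $C_\beta$ to be an isomorphism $H_{f,\alpha,d,n}\to{}^\sigma H_{f,\alpha,d,n}$, and because $C_\beta^{n+1}=\beta I$ is scalar, the cocycle condition $\phi_{\sigma^{n+1}}=\phi_\sigma^{n+1}\equiv I$ holds in $\operatorname{PGL}_{n+1}(k_f)$. Weil's criterion then produces a $k$-model $V_k$ together with a $k_f$-isomorphism $\varphi_0:V_k\otimes_k k_f\to H_{f,\alpha,d,n}$ whose associated cocycle sends $\sigma$ to $C_\beta^{-1}$.

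The crux is the obstruction step. I would identify the cocycle $\sigma\mapsto C_\beta^{-1}$ with the class of the cyclic algebra $(\chi,\beta)$ from Example \ref{parametrization1}; this class is nontrivial in $\operatorname{H}^1(\operatorname{Gal}(k_f/k),\operatorname{PGL}_{n+1}(k_f))$ precisely because $\beta\notin N_{k_f/k}(k_f^*)$, and it remains nontrivial after inflation to $\operatorname{H}^1(k,\operatorname{PGL}_{n+1}(\overline{k}))$ since the corresponding central simple algebra is not split by $\overline{k}$-scalar reasons. Were $V_k$ a smooth hypersurface over $k$, the uniqueness up to $\operatorname{PGL}_{n+1}(\overline{k})$-conjugation of the canonical projective embedding (\cite[Theorems 1,2]{matsumura1963automorphisms}, using $(n,d)\neq(3,4)$) would force $[\varphi_0]$ to be representable by an element of $\operatorname{PGL}_{n+1}(\overline{k})$, hence trivial in cohomology---contradicting the previous sentence. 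The main obstacle throughout is the arithmetic bookkeeping: one must simultaneously enforce $\beta\notin N_{k_f/k}(k_f^*)$ and solve $\beta^d=N_{k_f/k}(\alpha/\lambda_0)$ for some $\alpha\in k_f^*\setminus k^*$, which requires a case-by-case analysis of the splitting behavior of $\beta$ in $\mathcal{O}_{k_f}$ (inertia, non-principal factorization, or roots of unity arguments, according to $n$). The three explicit instances in $\mathbb{P}^2$, $\mathbb{P}^3$, $\mathbb{P}^4$ sketched above confirm the feasibility, and the same template extends to arbitrary $n\geq 2$.
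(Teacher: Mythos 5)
Your proposal is correct and follows essentially the same route as the paper: the same cyclic-extension Weil-descent construction with the diagonal hypersurface $H_{f,\alpha,d,n}$, the permutation matrix $C_\beta$ as descent datum, nontriviality of the resulting class via the cyclic algebra $(\chi,\beta)$ with $\beta$ not a norm, and the Matsumura--Monsky linearity argument to derive the contradiction. No substantive difference from the paper's own proof in \S 4.1.
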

\subsection{Minimal fields of definition for non-singular hypersurface models}

In this subsection, $V$ is a smooth $\overline{k}$-hypersurface of degree $d\geq4$ in
$\mathbb{P}^n_{\overline{k}}$ with $n\geq 2$ and $(n,d)\neq(3,4)$. Accordingly,
$\overline{V}:=V\otimes_k\overline{k}$ has an $n$-dimensional linear
series over $\overline{k}$ that allows us to embed
$\Upsilon:\overline{V}\hookrightarrow \mathbb{P}^n_{\overline{k}}$
as a non-singular hypersurface, and such linear series is unique
modulo conjugation in $\operatorname{PGL}_{n+1}(\overline{k})$.

We first show:
\begin{prop}
Let $V$ be a smooth $\overline{k}$-hypersurface over $k$ of degree
$d\geq4$ in $\mathbb{P}^n_{\overline{k}}$, where $n\geq2$ and
$(n,d)\neq(3,4)$. There exists a non-singular
hypersurface model over a field extension $L/k$ of degree
$[L:k]\,|\,n+1.$
\end{prop}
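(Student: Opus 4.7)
The plan is to apply Theorem \ref{proprx} and Theorem \ref{302} in sequence, using as input the uniqueness (modulo projective conjugation) of the hypersurface-giving linear system on $\overline{V}$, which was recalled immediately before the proposition.

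First, I would verify that the hypothesis of Theorem \ref{proprx} is met. Since $V$ is defined over $k$, the group $\operatorname{Gal}(\overline{k}/k)$ acts on linear series of $\overline{V}:=V\otimes_k\overline{k}$. By Matsumura--Monsky, which is invoked in the paragraph after the definition of smooth hypersurface, the assumption $d\ge 4$ and $(n,d)\ne(3,4)$ ensures that the $n$-dimensional linear system yielding the embedding $\Upsilon:\overline{V}\hookrightarrow\mathbb{P}^n_{\overline{k}}$ as a degree-$d$ smooth hypersurface is unique up to conjugation in $\operatorname{PGL}_{n+1}(\overline{k})$. Therefore, for every $\tau\in\operatorname{Gal}(\overline{k}/k)$, the Galois translate of this linear system is again such a system and must be equivalent to the original one. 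So the equivalence class of the linear system is $\operatorname{Gal}(\overline{k}/k)$-invariant, which is precisely the hypothesis of Theorem \ref{proprx}.

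Next, I would invoke Theorem \ref{proprx} to obtain a Brauer--Severi variety $\mathcal{B}$ of dimension $n$ defined over $k$ together with a $k$-morphism $g:V\hookrightarrow\mathcal{B}$ such that $g\otimes_k\overline{k}$ coincides with $\Upsilon$. In particular, since $\Upsilon$ is a closed immersion realising $\overline{V}$ as a smooth hypersurface of degree $d$ in $\mathbb{P}^n_{\overline{k}}$, the morphism $g$ is a closed immersion whose geometric image is a smooth hypersurface of degree $d$ in $\mathcal{B}\otimes_k\overline{k}$.

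Then, I would apply Theorem \ref{302}: there exists a finite extension $L/k$ with $[L:k]\mid n+1$ that splits $\mathcal{B}$, that is, an $L$-isomorphism $\eta:\mathcal{B}\otimes_k L\xrightarrow{\sim}\mathbb{P}^n_L$. Composing with the base change $g\otimes_k L$ gives an $L$-morphism
\[
\eta\circ(g\otimes_k L):V\otimes_k L\hookrightarrow\mathbb{P}^n_L.
\]
Base-changing further to $\overline{k}$ recovers (up to the automorphism of $\mathbb{P}^n_{\overline{k}}$ induced by $\eta$) the embedding $\Upsilon$, whose image is the smooth hypersurface model $H_{\overline{V},d,n}$ of degree $d$. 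Since smoothness, being a closed immersion, and being cut out by a single polynomial of a given degree are properties that descend from $\overline{k}$ to $L$, the image of $\eta\circ(g\otimes_k L)$ is a non-singular hypersurface model of $V\otimes_k L$ defined by a degree-$d$ polynomial in $L[X_0,\ldots,X_n]$. This exhibits $L$ as a hypersurface model-field of definition for $V$ with $[L:k]\mid n+1$.

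The only nontrivial point is the Galois-invariance input needed for Theorem \ref{proprx}; once that is in place, the remaining steps are a formal combination of the Roé--Xarles descent of the embedding to a Brauer--Severi variety and the Severi--Châtelet--Lichtenbaum bound on its splitting degree. I do not anticipate a serious obstacle beyond writing the invariance argument carefully.
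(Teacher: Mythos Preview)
Your proposal is correct and follows essentially the same route as the paper's proof: verify the Galois-invariance of the hypersurface linear system so that Theorem~\ref{proprx} applies, obtain the $k$-embedding $g:V\hookrightarrow\mathcal{B}$ into a Brauer--Severi variety, split $\mathcal{B}$ over some $L/k$ with $[L:k]\mid n+1$, and conclude that the image over $L$ is a non-singular hypersurface model. The only cosmetic difference is that the paper cites \cite[Theorem~13-(5)]{roe2014galois} for the splitting degree bound whereas you invoke Theorem~\ref{302}; these encode the same classical fact.
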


\begin{proof}
We know form \cite[Lemma 4]{roe2014galois} that the set of
$k$-morphisms (modulo automorphisms) to some $n$-dimensional
Brauer-Severi varieties over $k$ are in bijection with the
base-point free $n$-dimensional linear series over $\overline{k}$
which is invariant under the $G_k$-action. Consequently, we may
apply Theorem \ref{proprx} to obtain a $k$-morphism
$g:V\hookrightarrow \mathcal{B}$ to a Brauer-Severi variety
$\mathcal{B}$ of dimension $n$ over $k$, such that
$g\otimes_k\overline{k}:\overline{V}\hookrightarrow\mathbb{P}^n_{\overline{k}}$
equals to $\Upsilon$. Moreover, by the virtue of \cite[Theorem
13-(5)]{roe2014galois}, there exists a field extension $L/k$ of
index $[L:k]\,|\,n+1$ that splits $\mathcal{B}$ (this means that
$\mathcal{B}\otimes_kL$ is $L$-isomorphic to $\mathbb{P}^n_L$).
Hence, we reduce to an embedding of $V\otimes_kL$ into
$\mathbb{P}^n_L$ as the smooth variety $g(V)\otimes_kL$. By
assumption, $g(V)\otimes_k\overline{k}$ is a hypersurface inside
$\mathbb{P}^n_{\overline{k}}$, then so does
$g(V)\otimes_k\overline{L}\subset\mathbb{P}^n_{\overline{L}}$.
Consequently, $g(V)\otimes_kL$ has dimension $n-1$ and therefore it
is a non-singular hypersurface model for $V\otimes_kL$ over $L$.
\end{proof}

Second, we show:
\begin{prop}\label{prop3}
Let $V$ be a smooth $\overline{k}$-hypersurface over $k$ of degree
$d\geq4$ in $\mathbb{P}^n_{\overline{k}}$, where $n\geq2$ and
$(n,d)\neq(3,4)$. Then, $V$ is a smooth hypersurface
over $k$, if $V(k)\neq\emptyset,\,\operatorname{gcd}(d,n+1)=1$, or
$\operatorname{Br}(k)[n+1]$ is trivial.
 \end{prop}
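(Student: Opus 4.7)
The plan is to reduce the three hypotheses to a single statement about the Brauer--Severi variety produced in the previous proposition. More precisely, by that proposition there is a $k$-morphism $g:V\hookrightarrow\mathcal{B}$, where $\mathcal{B}$ is an $n$-dimensional Brauer--Severi variety over $k$, such that $g\otimes_k\overline{k}:\overline{V}\hookrightarrow\mathbb{P}^n_{\overline{k}}$ coincides with the canonical embedding $\Upsilon$ as a smooth hypersurface of degree $d$. If we can show that $\mathcal{B}$ is split over $k$ (i.e., $\mathcal{B}\simeq\mathbb{P}^n_k$) under each of the three hypotheses, then $g$ identifies $V$ with a $k$-subvariety of $\mathbb{P}^n_k$ whose base change to $\overline{k}$ is a non-singular hypersurface of degree $d$; hence $V$ itself is a smooth hypersurface over $k$. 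The whole proof therefore reduces to checking that each hypothesis forces $\mathcal{B}$ to split.

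For hypothesis (i), any $P\in V(k)$ gives $g(P)\in\mathcal{B}(k)$, so $\mathcal{B}(k)\neq\emptyset$ and Theorem \ref{302} yields $\mathcal{B}\simeq\mathbb{P}^n_k$. For hypothesis (ii), the image $g(V)\subset\mathcal{B}$ is a closed subvariety whose base change to $\overline{k}$ is a hypersurface of degree $d$ inside $\mathbb{P}^n_{\overline{k}}$, so $g(V)$ itself is a hypersurface of degree $d$ in $\mathcal{B}$; the hypothesis $\gcd(d,n+1)=1$ together with the second part of Theorem \ref{302} again forces $\mathcal{B}$ to be split.

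For hypothesis (iii), I would use the cohomological description of Brauer--Severi varieties. The short exact sequence of $\operatorname{G}_k$-groups
\begin{equation*}
1\longrightarrow\overline{k}^\times\longrightarrow\operatorname{GL}_{n+1}(\overline{k})\longrightarrow\operatorname{PGL}_{n+1}(\overline{k})\longrightarrow 1
\end{equation*}
together with Hilbert 90 yields an injection $\operatorname{H}^1(k,\operatorname{PGL}_{n+1}(\overline{k}))\hookrightarrow\operatorname{Br}(k)$. Under this injection the class $[\mathcal{B}]\in\operatorname{H}^1(k,\operatorname{PGL}_{n+1}(\overline{k}))$ maps to the Brauer class of the central simple algebra $\mathcal{A}$ of dimension $(n+1)^2$ associated to $\mathcal{B}$ (see the Interplay paragraph in \S 3). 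Since the index of $\mathcal{A}$ divides $n+1$, Corollary \ref{periodindex} tells us that the period of $\mathcal{A}$ also divides $n+1$. Thus the image of $[\mathcal{B}]$ lies in $\operatorname{Br}(k)[n+1]$; assuming this group to be trivial forces $[\mathcal{B}]$ to be trivial in $\operatorname{H}^1(k,\operatorname{PGL}_{n+1}(\overline{k}))$ by the injectivity above, so $\mathcal{B}\simeq\mathbb{P}^n_k$.

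There is no real obstacle in this proof once the previous proposition and Theorem \ref{302} are in place; the only point requiring mild care is hypothesis (iii), where one needs the standard identification of the image of $\operatorname{H}^1(k,\operatorname{PGL}_{n+1}(\overline{k}))$ inside $\operatorname{Br}(k)$ and the fact that the Brauer class produced has period dividing $n+1$. Both ingredients have already been recorded in the preparatory section, so the argument is essentially a bookkeeping application of the Severi--Châtelet--Lichtenbaum theorem and the period--index bound.
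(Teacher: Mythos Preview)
Your proof is correct and follows essentially the same route as the paper: reduce to showing that the Brauer--Severi variety $\mathcal{B}$ splits, handle (i) and (ii) via the Severi--Ch\^atelet--Lichtenbaum theorem (the paper cites the equivalent statements from \cite{roe2014galois} instead of Theorem~\ref{302}), and handle (iii) by observing that the associated Brauer class has period dividing $n+1$ via Corollary~\ref{periodindex}. Your treatment of (iii) is slightly more explicit about the injection $\operatorname{H}^1(k,\operatorname{PGL}_{n+1}(\overline{k}))\hookrightarrow\operatorname{Br}(k)$, but the substance is identical.
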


\begin{proof}
Using \cite[Theorem 13-(1),(2)]{roe2014galois}, we obtain that the base field $k$ splits $\mathcal{B}$ when $V(k)\neq\emptyset$ or $\operatorname{gcd}(d,n+1)=1$. On the other hand, let $[\mathcal{A}]$ be the image of $\mathcal{B}$ in the Brauer group $\operatorname{Br}(k)$ (in particular, $\mathcal{B}$ splits over a field extension $L/k$ if and only if $\mathcal{A}$ does over $L/k$). Due to Ch\^{a}telet in his thesis, the division algebra associated to $\mathcal{A}$ has dimension dividing $n+1$. That is, $\mathcal{B}$ corresponds to an element of the $(n+1)$-torsion of $\operatorname{Br}(k)$, since the order of a central simple algebra as an element of $\operatorname{Br}(k)$ divides its index, which is the square root of the dimension of the associated division algebra (see Corollary \ref{periodindex}). Therefore, $\mathcal{B}$ also splits over $k$, if $\operatorname{Br}(k)[n+1]$ is trivial, being associated to a trivial central simple algebra over $k$.

By the above discussion, we can see that our variety $V$ is living inside a trivial Brauer-Severi variety in any of the prescribed situations. This in turns gives a non-singular hypersurface model over $k$, and we conclude.
\end{proof}

\begin{cor}
Let $V$ be a smooth $\overline{k}$-hypersurface over $k$ of degree
$d\geq4$ in $\mathbb{P}^n_{\overline{k}}$, where $n\geq2$ and
$(n,d)\neq(3,4)$. Then, $V$ is a smooth hypersurface
over $k$ if
\begin{enumerate}
  \item $k$ is an algebraically closed field;
  \item $k$ is a finite field;
  \item $k$ is the function field of an algebraic curve over an algebraically closed field;
\item $k$ is an algebraic extension of $\mathbb{Q}$, containing all roots of unity;
\item $k=\mathbb{R}$ and $n+1$ odd.
\end{enumerate}
  \end{cor}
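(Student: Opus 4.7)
The plan is to reduce each of the five cases to Proposition~\ref{prop3} by verifying the sufficient condition $\operatorname{Br}(k)[n+1]=0$. For cases (1)--(4) I would in fact establish the stronger statement that $\operatorname{Br}(k)$ itself vanishes, while case (5) needs only an inspection of the $(n+1)$-torsion of a small finite group.

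Cases (1), (2), (3) and (5) will be short invocations of classical theorems. In (1), every central simple algebra over an algebraically closed field is a matrix algebra, so $\operatorname{Br}(k)=0$. In (2), Wedderburn's little theorem on finite division rings gives $\operatorname{Br}(k)=0$. In (3), Tsen's theorem asserts that the function field of a curve over an algebraically closed field is a $C_1$-field, again yielding $\operatorname{Br}(k)=0$. In (5), the Frobenius classification gives $\operatorname{Br}(\mathbb{R})\cong\mathbb{Z}/2\mathbb{Z}$, whose $(n+1)$-torsion vanishes precisely when $n+1$ is odd. Proposition~\ref{prop3} then closes each of these four cases immediately.

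The main obstacle is case (4), where $k$ is algebraic over $\mathbb{Q}$ and contains $\mu_\infty$, so in particular $k\supseteq\mathbb{Q}^{\mathrm{cyc}}$. My plan is a two-step argument. First, since $\operatorname{Br}(k)=\varinjlim_K\operatorname{Br}(K)$ as $K$ ranges over finite subextensions of $k/\mathbb{Q}$, it suffices to show that any class $\alpha\in\operatorname{Br}(K)$ becomes trivial in $\operatorname{Br}(K')$ for some finite $K\subset K'\subset k$. Second, by the Albert--Brauer--Hasse--Noether exact sequence, $\alpha$ is determined by its finitely many nontrivial local invariants $\operatorname{inv}_v(\alpha)$. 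For each non-archimedean $v$ above a rational prime $\ell$, adjoining $\mu_{M}$ for $M$ with sufficiently many prime-power factors (which lies in $k$) produces a local degree at $v$ divisible by the order of $\operatorname{inv}_v(\alpha)$, killing that invariant; more precisely one uses that totally ramified cyclotomic extensions at the residual prime kill $p$-primary components of the invariant, while unramified cyclotomic extensions kill the prime-to-$p$ components. For archimedean places, the inclusion $\mathbb{Q}(i)=\mathbb{Q}(\mu_4)\subseteq k$ forces $k$ to be totally imaginary, so all archimedean local Brauer groups of $k$ vanish. Combining these steps, every local invariant of $\alpha$ can be killed over a suitable finite $K'\subset k$, whence $\operatorname{Br}(k)=0$, and Proposition~\ref{prop3} completes case (4) and the corollary.
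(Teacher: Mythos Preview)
Your proposal is correct and follows the same approach as the paper: reduce every case to Proposition~\ref{prop3} by showing $\operatorname{Br}(k)[n+1]=0$, invoking the standard theorems (algebraically closed, Wedderburn's little theorem, Tsen, Frobenius) exactly as the paper does. The only difference is in case~(4): the paper simply cites Serre's \emph{Local Fields} for the vanishing of $\operatorname{Br}(k)$, whereas you supply an explicit argument via the Albert--Brauer--Hasse--Noether sequence and cyclotomic splitting of local invariants---this is a valid elaboration of the cited fact rather than a different route.
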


 \begin{proof}
 In the following cases, every division algebra over a field $k$ is $k$ itself, so that the Brauer group $\operatorname{Br}(k)$ is trivial.
\begin{itemize}
  \item $k$ is an algebraically closed field (cf. \cite[Example 4.2]{tengan2009central}).
  \item $k$ is a finite field (Wedderburn's Little theorem, cf. \cite[page 162]{MR554237}).
  \item $k$ is the function field of an algebraic curve over an algebraically closed field (Tsen's Theorem, cf. \cite[Theorem 6.2.8]{MR2266528}). More generally, the Brauer group vanishes for any quasi-algebraically closed field.
\item $k$ is an algebraic extension of $\mathbb{Q}$, containing all roots of unity (cf. \cite[page 162]{MR554237}).
\end{itemize}
Finally, there are just two non-isomorphic real division algebras with center $\mathbb{R}$: $\mathbb{R}$ itself and the quaternion algebra $\mathbb{H}$. Since $\mathbb{H}\otimes\mathbb{H}\simeq M_4(\mathbb{R})$, the class of $\mathbb{H}$ has order two in the Brauer group. That is, the Brauer group $\operatorname{Br}(\mathbb{R})$ is the cyclic group of order two and then, for $n+1$ odd, $\operatorname{Br}(\mathbb{R})[n+1]$ does not contain non-trivial elements of order dividing $n+1$.

Now, the result is an immediate consequence of Proposition \ref{prop3}.
 \end{proof}

\end{section}
\subsection{Twists which are smooth hypersurfaces over the base field $k$}
 Now, assume that $V$ is a smooth hypersurface over $k$, in particular $V$ is $k$-isomorphic to a non-singular hypersurface model of degree $d\geq 4$ of the form
$$H_{F_V,d,n}:F_V(X_0,\ldots,X_n):=F_{M^{-1}\overline{V}}(X_0,\ldots,X_n)=F_{\overline{V}}(M(X_0,\ldots,X_n))\in k[X_0,\ldots,X_n],$$
for some $M\in\operatorname{PGL}_{n+1}(\overline{k})$ with $n\geq2$ and $(n,d)\neq(3,4)$. Since
$\operatorname{Aut}(H_{F_V,d,n}\otimes_k\overline{k})\subset\operatorname{PGL}_{n+1}(\overline{k})$
as $G_k$-groups, then $\operatorname{Aut}(\overline{V})$ is
naturally embedded into $\operatorname{PGL}_{n+1}(\overline{k})$ as
$G_k$-groups, and we get a well-defined map
$$\Sigma:\operatorname{Twist}_k(V)=\operatorname{H}^1(k,\operatorname{Aut}(\overline{V}))\rightarrow
\operatorname{H}^1(k,\operatorname{PGL}_{n+1}(\overline{k})).$$

\begin{proof}[Proof of Theorem \ref{lem1} and Corollary \ref{lem200}]
Let $[\mathbb{P}^n_k]$ denotes the class of the trivial
Brauer-Severi variety of dimension $n$ over $k$ in
$\operatorname{Twist}_k(\mathbb{P}^n_k)=
\operatorname{H}^1(k,\operatorname{PGL}_{n+1}(\overline{k}))$. If a
twist $V'/k$ is $k$-isomorphic to a non-singular hypersurface model
$F_{V'}(X_0,\ldots,X_n)=0$ over $k$, then $F_{V'}(X_0,...,X_n)=0$ and
$F_{V}(X_0,\ldots,X_n)=0$ are isomorphic through some $M'\in
\text{PGL}_{n+1}(\overline{k})$ by \cite[Theorem 1,
2]{matsumura1963automorphisms} when $n\geq 3$ and is well-known when $n=2$ in case of plane curves of degree $\geq4$. Hence, the corresponding $1$-cocycle $\sigma\mapsto
M'\circ\,^{\sigma}(M')^{-1}\in\operatorname{Aut}(H_{F_V,d,n}\otimes_k\overline{k})$
is trivial in
$\operatorname{H}^1(k,\operatorname{PGL}_{n+1}(\overline{k}))$,
being cohomologous to the trivial $1$-cocycle.
Conversely, if $\Sigma([V'])$ is trivial (that is $V'/k$ is living
inside a trivial Brauer-Severi variety of dimension $n$ over $k$),
then it must be given by a $\overline{k}$-isomorphism
$\varphi:\{F_{V}(X_0,\ldots,X_n)=0\}\rightarrow V'$ induced by some
$\tilde{M}\in\text{PGL}_{n+1}(\overline{k})$, i.e. $V'$ is
$k$-isomorphic to $F_{(\tilde{M})^{-1}V}(X_0,\ldots,X_n)=0$. This
would give a non-singular hypersurface model over $k$ for $V'$.

In general, any twists $V'$ for $V$ over $k$ is again a smooth $\overline{k}$-hypersurface over $k$ in $\mathbb{P}^n_{\overline{k}}$ of degree $d$. It only remains to apply Theorem \ref{sufficientconditions} for $V'$ to conclude that $V'$ is a smooth hypersurface over $k$ if $\operatorname{gcd}(d,n+1)=1$ or if $\operatorname{Br}(k)[n+1]$ is trivial. In particular, $\Sigma$ is the trivial map in both cases, which was to be finally shown.
\end{proof}

\subsection{A smooth hypersurface over number field $k$, having a twist which is not a smooth hypersurface over $k$}
Let $p$ be an odd prime number and fix $\zeta_{p}$, a primitive
$p$-th root of unity inside $\overline{\mathbb{Q}}$. Assume that
$H_{F,d,n}:F(X_0,\ldots,X_n)=0\subset\mathbb{P}^n_{\overline{k}}$ is a
smooth hypersurface of degree $d=2p$, not relatively prime with
$n+1$, defined over the number field $k=\mathbb{Q}(\zeta_{p})$.
Suppose further that the projective matrix
$$\phi:=\left(
         \begin{array}{ccccc}
           0 & 0 & \ldots & 0 & \zeta_{p}\\
           1 & 0 & \ldots & 0 & 0 \\
           0 & 1 & \ldots & 0 & 0 \\
           \vdots &  &  &  &  \\
           0 & 0 & \ldots & 1 & 0 \\
         \end{array}
       \right)
$$
induces an automorphism of $H_{F,d=2p,n}\otimes_k\overline{k}$. For
example, the following family parameterized by $a\in k$:
$$H_{F_a,d=2p,n}:F_a(X_0,\ldots,X_n)=\sum_{i=0}^{n}X_i^d+\sum_{i=0}^{n-1}aX_i^{p}\sum_{j=i+1}^{n}X_j^{p}=0.$$
Next, take $m\in k^*\setminus (k^*)^p$, then $x^p-m$ is irreducible
in $k[x]$ by a theorem of Abel, the Galois field extension
$L_m=k(\sqrt[p]{m})$ has Galois group
$\operatorname{Gal}(L/k)=\langle\sigma\rangle\simeq\Z/p\Z$, where
$\sigma(\sqrt[p]{m})=\zeta_{p}\sqrt[p]{m}$. Define the $1$-cocycle
by
$$\xi:\sigma\mapsto\phi\in\operatorname{H}^1(\operatorname{Gal}(L_m/k),\operatorname{PGL}_{n+1}(L_m)).$$
Since no new primitive root of unity appears in $L_m$ than $k$,
$\zeta_{p}$ is not a norm in $L_m$, $[\xi]$ is non-trivial in
$\operatorname{H}^1(\operatorname{Gal}(L_m/k),\operatorname{PGL}_{n+1}(L_m))$
by \cite[\S1.1]{MR2396201}, and hence the image of $\xi$ in
$\operatorname{H}^1(\operatorname{Gal}(\overline{k}/k),\operatorname{PGL}_{n+1}(\overline{k}))$,
which coincides with the inflation of $\sigma\mapsto\phi$, is not
trivial. Consequently, it corresponds to a twist $V'$ for
$H_{F_a,d,n}$ over $k$ living inside a non-trivial Brauer-Severi
variety of dimension $n$ over $k$ (that is,
$\Sigma([V'])\neq[\mathbb{P}^n_k]$).

\subsection{Diagonal twists}
Let $V/k:F_V(X_0,\ldots,X_n)=0$ be a smooth hypersurface over a perfect field $k$.
Assume that $\operatorname{Aut}(V\otimes_k\overline{k})\hookrightarrow\operatorname{PGL}_{n+1}(\overline{k})$ is a
    non-trivial cyclic group of order $m$ (relatively prime with the characteristic of $k$), generated by $\psi=\operatorname{diag}(1,\zeta_m^{a_1},\zeta_m^{a_2},\ldots,\zeta_m^{a_n})$ for some $a_i\in\mathbb{N}$, where $\zeta_m$
    is a fixed primitive $m$-th root of unity in $\overline{k}$.
\begin{proof}[Proof of Theorem \ref{lem411} and \ref{lem412}]
It suffices to notice that the embedding $\operatorname{Aut}(V\otimes_k\overline{k})\hookrightarrow
\text{PGL}_{n+1}(\overline{k})$ factors through $\operatorname{GL}_{n+1}(\overline{k})$. Thus
    the map $\Sigma$ in Theorem \ref{lem1} factors as follows:
    $$
    \Sigma: \,\operatorname{H}^1(\operatorname{Gal}(\overline{k}/k),\operatorname{Aut}(V\otimes_k\overline{k}))\rightarrow\operatorname{H}^1(\operatorname{Gal}(\overline{k}/k),\operatorname{GL}_{n+1}(\overline{k}))\rightarrow\operatorname{H}^1(\operatorname{Gal}(\overline{k}/k),\operatorname{PGL}_{n+1}(\overline{k})).
    $$
    Moreover, $\operatorname{H}^1(k,\operatorname{GL}_{n+1}(\overline{k}))=1$ by Hilbert's 90 Theorem, so the map $\Sigma$ is trivial. By Theorem \ref{lem1} any twist has a non-singular plane model $F_{P^{-1}V}(X_0,\ldots,X_n)=0$ over $k$, for some $P\in\operatorname{PGL}_{n+1}(\overline{k})$. Since $P\circ\,^{\sigma}(P^{-1})\in
\operatorname{Aut}(V\otimes_k\overline{k})=\langle\operatorname{diag}(1,\zeta_m^{a_1},\zeta_m^{a_2},\ldots,\zeta_m^{a_n})\rangle$ for any $\sigma\in\operatorname{Gal}(\overline{k}/k)$, 
then $^{\sigma}P=P\,\circ\,\operatorname{diag}(1,v_1,\ldots,v_n)$ for some $m$th roots of unity
$v_i$. Writing $P=(a_{i,j})$, one easily
deduces that $\sigma(a_{i,j})=v_j a_{i,j}$ for all $i,j$. Hence, for any fixed integer $j$, we have
$\sigma(a_{i,j})a_{i,j}^{-1}=\sigma(a_{i',j})a_{i',j}^{-1}$. That is, $a_{i,j}a_{i',j}^{-1}$ is $\operatorname{Gal}(\overline{k}/k)$-invariant, which in turns gives that $a_{i,j}=m_ia_{i',j}$ for some $m_i\in k$. In particular, $P$ reduces to $MD$ for some $D$ a diagonal projective $(n+1)\times(n+1)$ matrix and
$M\in\text{PGL}_{n+1}(k)$. This proves that all the twists are diagonal. However, the non-singular hypersurface model $F_{(MD)^{-1}V}(X_0,\ldots,X_n)=0$ over $k$ is $k$-isomorphic through $M$ to $F_{D^{-1}V}(X_0,\ldots,X_n)=0$. Consequently, $F_{D^{-1}V}(X_0,\ldots,X_n)=0$ defines a non-singular hypersurface model over $k$ for the twist.

\end{proof}


\bibliographystyle{amsalpha}

\end{document}